\title{Quasimodularity of the $k$th Residual Cranks}
\author[T.~Morrill]{Thomas Morrill}
\address{School of Science, The University of New South Wales (Canberra), ACT, Australia}
\email{t.morrill@adfa.edu.au}
\author[A.~Simoni\v{c}]{Aleksander Simoni\v{c}}
\address{School of Science, The University of New South Wales (Canberra), ACT, Australia}
\email{a.simonic@student.adfa.edu.au}
\date{\today}
\newcommand{\nklist}[2]{{#1}_{1}, \ldots, {#1}_{#2}}
\newcommand{\aqprod}[3]{\left({#1};{#2}\right)_{#3}}
\newcommand{\ocr}{\overline{cr}}
\newcommand{\de}[0]{\mathrel{\mathop:}=}
\newcommand{\dif}[1]{\mathrm{d}#1}
\newcommand{\ie}[0]{\mathrm{i}}
\newcommand{\lcm}[1]{\mathrm{lcm}\left(#1\right)}
\newcommand{\OP}{\overline{P}}
\newtheorem{theorem}{Theorem}
\newtheorem{lemma}{Lemma}
\newtheorem{cor}{Corollary}
\newcommand{\CC}{\mathbb{C}}
\begin{document}

\begin{abstract}
We establish quasimodularity for a family of residual crank generating functions defined on overpartitions. We also show that the second moments of these $k$th residual cranks admit a combinatoric interpretation as weighted overpartition counts.
\end{abstract}

\maketitle

%\section{Introduction}

The partition crank function was developed by Andrews and Garvan \cite{Garvan-trip, AGCrank} with the goal of giving a combinatorial proof of Ramanujan's congruence \cite{Ramanujan}
\begin{equation} 
\label{Ram}
p(11n+6) \equiv 0 \mod 11,
\end{equation}
where $p(n)$ denotes the number of partitions of $n$. Recall, a partition $\lambda$ of $n$ is a non-increasing sequence of positive integers which sum to $n$. We denote this relation by $\lambda \vdash n$.

Given $\lambda$, let $\omega(\lambda)$ denote the number of occurrences of $1$ as a part of $\lambda$.
The \emph{crank} of $\lambda$, which we denote by $cr(\lambda)$, is then defined according to the value of  $\omega(\lambda)$.
If $\omega(\lambda) = 0$, then $cr(\lambda)$ is the largest part of $\lambda$.
Otherwise\footnote{There is a traditional handwaving with $\lambda = (1)$ in order to accommodate the generating series in \eqref{Garvan-prod}. This is inherited by the residual cranks which follow.}, $cr(\lambda)$ is equal to the number of parts of $\lambda$ which exceed $\omega(\lambda)$, minus $\omega(\lambda)$.
Let $M(m,n)$ denote the number of partitions $\lambda \vdash n$ with $cr(\lambda) = m$.
Andrews and Garvan proved that
\[
 M(m, 11n + 6) = \frac{1}{11} \ p(11n + 6),
\]
which is sufficient to demonstrate \eqref{Ram}.
The two-variable generating series for the cranks of partitions is given by \cite{AGCrank}
\begin{equation}             
\label{Garvan-prod}
C(z;q)\de \sum_{n\geq 0}\sum_{m\geq 1} M(m, n) z^m q^n =\frac{\aqprod{q}{q}{\infty}}{\aqprod{zq, q/z}{q}{\infty}},
\end{equation}
where 
\begin{gather*}
 \aqprod{\nklist{a}{k}}{q}{n} = \prod_{i=0}^{n-1} (1-a_1 q^i) \cdots (1-a_k q^i), \\
 \aqprod{\nklist{a}{k}}{q}{\infty}=\lim_{n\to\infty}\aqprod{\nklist{a}{k}}{q}{n}
\end{gather*}
is the $q$-Pochhammer symbol. From \eqref{Garvan-prod} we observe that
\begin{equation}
\label{mirror}
M(m, n) = M(-m, n).                    
\end{equation}
The crank function has gone on to play an interesting role in the theory of partitions, some examples being the moments of the crank function, and weighted counts of partitions \cite{ACK, run-run-Lovejoy}.
Let $C_\ell(q)$ denote the generating series for the $\ell$th moments of the crank function,
\[
  C_\ell(q) = \sum_{n=0}^\infty \sum_{m = -\infty}^\infty m^\ell M(m,n)q^n.
\]
Futher, take $\delta_q$ to be the usual differential operator $q \tfrac{\dif{}}{\dif{q}}$, and $P(q)\de \tfrac{1}{\aqprod{q}{q}{\infty}}$ the generating function for partitions. Atkin and Garvan established quasimodular properties of the function $C_\ell(q)$ and its images under $\delta_q$.

\begin{theorem}[Theorem 4.2 of \cite{AtkinGarvan}]
For $m\geq 0$ and $j\geq1$, the function $\delta_q^m \left(C_{2j}\right)$ is an element of the space
\[
P\cdot \overline{\mathcal{W}}_{m+j}\left(\Gamma\right),
\]
where $\overline{\mathcal{W}}_k\left(\Gamma\right)$ is the space of all quasimodular forms of weight at most $2k$ on $\Gamma$ with no constant term.
\end{theorem}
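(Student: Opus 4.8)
The plan is to reduce the statement to a single generating-function identity that exhibits $C(z;q)/P(q)$ as an exponential in Eisenstein-type series, and then to propagate the resulting quasimodularity through $\delta_q$ by induction on $m$. Since $M(m,n)=M(-m,n)$, odd moments vanish, and, viewing $C(z;q)$ through its product representation \eqref{Garvan-prod} as a Laurent series in $z$ invariant under $z\mapsto z^{-1}$, the even moments are derivatives of $C$ at $z=1$: with $\delta_z\de z\tfrac{\dif{}}{\dif{z}}$ and the substitution $z=e^{w}$ (so that $\delta_z=\tfrac{\dif{}}{\dif{w}}$) one has
\[
C_{2j}(q)=\left.\delta_z^{2j}\,C(z;q)\right|_{z=1}=(2j)!\,P(q)\,[w^{2j}]\,\frac{C(e^{w};q)}{P(q)}.
\]

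The heart of the argument is the identity of formal power series in $w$
\[
\log\!\left(\frac{C(e^{w};q)}{P(q)}\right)=2\sum_{l\geq 1}\frac{w^{2l}}{(2l)!}\,\mathcal{E}_{2l}(q),\qquad \mathcal{E}_{2l}(q)\de\sum_{N\geq 1}\sigma_{2l-1}(N)\,q^{N},
\]
where $\sigma_{s}(N)=\sum_{d\mid N}d^{s}$. I would obtain this by writing $C(z;q)/P(q)=\aqprod{q}{q}{\infty}^{2}/\aqprod{zq,q/z}{q}{\infty}$, taking a logarithm, expanding each factor via $\log(1-x)=-\sum_{k\geq1}x^{k}/k$, interchanging the order of summation, and using $z^{k}+z^{-k}-2=2\sum_{l\geq1}(kw)^{2l}/(2l)!$ together with $\sum_{n\geq1}q^{nk}=q^{k}/(1-q^{k})$; the inner sum then collapses to $\sum_{k\geq1}k^{2l-1}q^{k}/(1-q^{k})=\mathcal{E}_{2l}(q)$. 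The point is that, up to an additive constant and a nonzero scalar, $\mathcal{E}_{2l}$ is the Eisenstein series $E_{2l}$: for $l\geq 2$ it is a modular form of weight $2l$ with its constant term removed, and for $l=1$ it is the quasimodular $E_2$ likewise normalised; in all cases $\mathcal{E}_{2l}\in\overline{\mathcal{W}}_{l}(\Gamma)$.

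Exponentiating and extracting $[w^{2j}]$ by the exponential formula gives
\[
\frac{C_{2j}(q)}{P(q)}=(2j)!\sum_{\mu\vdash j}\ \prod_{l\geq 1}\frac{1}{m_l(\mu)!}\left(\frac{2\,\mathcal{E}_{2l}(q)}{(2l)!}\right)^{m_l(\mu)},
\]
a finite sum over partitions $\mu$ of $j$ in which $m_l(\mu)$ is the multiplicity of $l$ in $\mu$. Each summand is a scalar multiple of $\prod_{l}\mathcal{E}_{2l}(q)^{m_l(\mu)}$, a product of (quasi)modular forms of total weight $\sum_l 2l\,m_l(\mu)=2j$ with vanishing constant term (as $j\geq1$ forces at least one factor), hence lies in $\overline{\mathcal{W}}_{j}(\Gamma)$. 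Thus $C_{2j}\in P\cdot\overline{\mathcal{W}}_{j}(\Gamma)$, the case $m=0$.

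For the inductive step I would invoke the standard closure properties of quasimodular forms: $\overline{\mathcal{W}}_{a}(\Gamma)\cdot\overline{\mathcal{W}}_{b}(\Gamma)\subseteq\overline{\mathcal{W}}_{a+b}(\Gamma)$ and $\delta_q\colon\overline{\mathcal{W}}_{k}(\Gamma)\to\overline{\mathcal{W}}_{k+1}(\Gamma)$ (the weight rises by $2$, and $\delta_q$ of any $q$-series has vanishing constant term). Combining these with the identity $\delta_q P=P\cdot\mathcal{E}_2$ --- equivalently $\delta_q\log P=-\delta_q\log\aqprod{q}{q}{\infty}=\mathcal{E}_2$ --- and $\mathcal{E}_2\in\overline{\mathcal{W}}_{1}(\Gamma)$, the product rule yields, for any $g\in\overline{\mathcal{W}}_{m+j}(\Gamma)$,
\[
\delta_q\bigl(P\cdot g\bigr)=P\cdot\bigl(\mathcal{E}_2\,g+\delta_q g\bigr)\in P\cdot\overline{\mathcal{W}}_{m+j+1}(\Gamma).
\]
Iterating from the base case gives $\delta_q^{m}(C_{2j})\in P\cdot\overline{\mathcal{W}}_{m+j}(\Gamma)$ for all $m\geq0$. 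I expect the genuine work to be concentrated in establishing the logarithmic identity of the second paragraph and in correctly matching the series $\mathcal{E}_{2l}$ with Eisenstein series of the right weight and quasimodular type; after that, everything reduces to bookkeeping with the well-known behaviour of quasimodular forms under $\delta_q$ and multiplication.
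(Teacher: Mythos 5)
Your proposal is correct and follows essentially the same route as the paper (and Atkin--Garvan's original argument): expand $\log\bigl(C(e^w;q)/P(q)\bigr)$ to write $C_{2j}$ as $P$ times an isobaric polynomial of weight $2j$ in the series $\Phi_{2l-1}$ with no constant term --- this is exactly the representation the paper records in \eqref{eq:rep} (specialised to $k=1$) --- and then propagate through $\delta_q^m$ using $\delta_q\colon\overline{\mathcal{W}}_k(\Gamma)\to\overline{\mathcal{W}}_{k+1}(\Gamma)$ together with $\delta_q(P)=P\,\Phi_1$, mirroring the paper's Lemmas \ref{lem:lifting}--\ref{lem:lifting2} and its treatment of $\delta_q(\overline{P})$. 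No gaps; your $\mathcal{E}_{2l}$ is the paper's $\Phi_{2l-1}$, and the constant-term and weight bookkeeping are handled correctly.
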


Loosely speaking, a quasimodular form is the product of a modular form which transforms under a group of M\"{o}bius transformations and the non-modular Eisenstein series $E_2(z)$. We postpone precise definitions until Section \ref{review}.

Of particular note is the notion of a \emph{residual crank}, wherein the ordinary crank function is applied to a subset of the parts of $\lambda$. Jennings-Shaffer \cite{Jennings} studied the functions
\begin{gather*}
  C1(z;q) = \frac{\aqprod{q^2}{q^4}{\infty} \aqprod{q}{q}{\infty}}
                  {\aqprod{zq, q/z}{q}{\infty}}, \\
  C2(z;q) = \frac{\aqprod{-q}{q^2}{\infty} \aqprod{q^2}{q^2}{\infty}}
                  {\aqprod{zq^2, q^2/z}{q^2}{\infty}}, \\
  C4(z;q) = \frac{\aqprod{q^4}{q^4}{\infty} }
                  {\aqprod{q}{q^2}{\infty} \aqprod{zq^4, q^4/z}{q^4}{\infty}},
\end{gather*}
as the generating series for residual crank-like functions for partitions without repeated odd parts.
If we let $C1_\ell(q)$, $C2_\ell(q)$, and $C4_\ell(q)$ denote the generating functions of the $\ell$th moments of these residual cranks, respectively, then these generating functions exhibit quasimodular properties.

\begin{theorem}[Theorem 1.1 of \cite{Jennings}]
For $k\geq 1$, the functions in 
\begin{gather*}
    \left\{\delta_q^m \left(C1_{2j}\right)\colon m \geq 0, 1\leq j\leq k, j + m \leq k \right\}, \\
    \left\{\delta_q^m \left(C2_{2j}\right)\colon m \geq 0, 1\leq j\leq k, j + m \leq k \right\}, \\
    \left\{\delta_q^m \left(C4_{2j}\right)\colon m \geq 0, 1\leq j\leq k, j + m \leq k \right\},
\end{gather*}
are in the space $\aqprod{-q}{q^2}{\infty}P\left(q^2\right)\overline{\mathcal{W}}_k\left(\Gamma_0(4)\right)$.
\end{theorem}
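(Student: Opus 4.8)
The plan is to pull the statement back to the ordinary crank generating function $C(z;q)$ of \eqref{Garvan-prod}, whose even moments $C_{2j}(q)$ are controlled by Theorem~4.2 of \cite{AtkinGarvan}, and then to chase a single $q$-product factor through the operator $\delta_q$. First I would observe, straight from the definitions, that the $z$-dependence of each of Jennings--Shaffer's series is confined to one copy of $C(\,\cdot\,;\,\cdot\,)$:
\[
C1(z;q)=\aqprod{q^2}{q^4}{\infty}C(z;q),\qquad C2(z;q)=\aqprod{-q}{q^2}{\infty}C(z;q^2),\qquad C4(z;q)=\frac{C(z;q^4)}{\aqprod{q}{q^2}{\infty}}.
\]
Applying the $2j$-th moment operator $\left(z\,\dif{}/\dif{z}\right)^{2j}$ and evaluating at $z=1$, which commutes with multiplication by any power series in $q$, then yields
\[
C1_{2j}(q)=\aqprod{q^2}{q^4}{\infty}C_{2j}(q),\qquad C2_{2j}(q)=\aqprod{-q}{q^2}{\infty}C_{2j}(q^2),\qquad C4_{2j}(q)=\frac{C_{2j}(q^4)}{\aqprod{q}{q^2}{\infty}}.
\]

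Next I would introduce $A(q)\de\aqprod{-q}{q^2}{\infty}P(q^2)=\aqprod{q^2}{q^2}{\infty}\big/\bigl(\aqprod{q}{q}{\infty}\aqprod{q^4}{q^4}{\infty}\bigr)$, the prefactor of the target space. Splitting each $q$-Pochhammer symbol into its odd- and even-index parts and using $\aqprod{q}{q^2}{\infty}\aqprod{-q}{q^2}{\infty}=\aqprod{q^2}{q^4}{\infty}$ produces the elementary identities $\aqprod{q^2}{q^4}{\infty}P(q)=A(q)$ and $P(q^4)\big/\aqprod{q}{q^2}{\infty}=A(q)$. By Theorem~4.2 of \cite{AtkinGarvan} with $m=0$ we may write $C_{2j}(q)=P(q)\,g_j(q)$, where $g_j$ is the element of $\overline{\mathcal{W}}_j(\Gamma)$ supplied by that theorem (unique, since $P$ is a unit of $\CC[[q]]$); thus $g_j$ is a constant-term-free polynomial in $E_2,E_4,E_6$ of weight at most $2j$. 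Substituting and using the two identities collapses the three moment generating functions to
\[
C1_{2j}(q)=A(q)\,g_j(q),\qquad C2_{2j}(q)=A(q)\,g_j(q^2),\qquad C4_{2j}(q)=A(q)\,g_j(q^4).
\]
For $N\in\{1,2,4\}$ the dilated forms $E_4(Nz),E_6(Nz)$ are modular on $\Gamma_0(4)$, and $E_2(Nz)$ is quasimodular of weight two on $\Gamma_0(4)$ because $E_2(z)-NE_2(Nz)$ is a weight-two modular form on $\Gamma_0(N)$; since the dilation $q\mapsto q^N$ disturbs neither the weight bound nor the vanishing constant term, each of $g_j(q),g_j(q^2),g_j(q^4)$ lies in $\overline{\mathcal{W}}_j(\Gamma_0(4))$. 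Hence $C1_{2j},C2_{2j},C4_{2j}\in A(q)\,\overline{\mathcal{W}}_j(\Gamma_0(4))$.

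Finally I would carry this through $\delta_q$. Since $A(q)$ is, up to a rational power of $q$, an $\eta$-quotient for $\Gamma_0(4)$, the identity $\delta_q\log\eta(Nz)=\tfrac{N}{24}E_2(Nz)$ shows that $\delta_q\log A$ is a fixed rational-linear combination of $1,E_2(z),E_2(2z),E_2(4z)$; it is therefore quasimodular of weight at most two on $\Gamma_0(4)$, and it has no constant term because $A(q)=1+O(q)$, so $\delta_q\log A\in\overline{\mathcal{W}}_1(\Gamma_0(4))$. For $h\in\overline{\mathcal{W}}_\ell(\Gamma_0(4))$ the product rule gives $\delta_q(A\,h)=A\bigl((\delta_q\log A)\,h+\delta_q h\bigr)$; as $\delta_q$ raises the weight of a quasimodular form by two and preserves the vanishing of the constant term, and multiplication by the constant-term-free form $\delta_q\log A$ does the same, the bracket lies in $\overline{\mathcal{W}}_{\ell+1}(\Gamma_0(4))$. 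So $\delta_q$ sends $A\,\overline{\mathcal{W}}_\ell(\Gamma_0(4))$ into $A\,\overline{\mathcal{W}}_{\ell+1}(\Gamma_0(4))$, and iterating $m$ times from $C1_{2j}\in A\,\overline{\mathcal{W}}_j(\Gamma_0(4))$ (and likewise for $C2_{2j}$ and $C4_{2j}$) places $\delta_q^m(C1_{2j})$ in $A\,\overline{\mathcal{W}}_{j+m}(\Gamma_0(4))\subseteq A\,\overline{\mathcal{W}}_k(\Gamma_0(4))$ whenever $j+m\le k$, since $\overline{\mathcal{W}}_{j+m}\subseteq\overline{\mathcal{W}}_k$. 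As $A(q)=\aqprod{-q}{q^2}{\infty}P(q^2)$, this is precisely the asserted containment. I expect the main obstacle to be this last step — establishing that $\delta_q\log A$ is genuinely a weight-two quasimodular form on $\Gamma_0(4)$ rather than merely a $q$-series, and verifying the stability of $A\,\overline{\mathcal{W}}_\bullet(\Gamma_0(4))$ under $\delta_q$ — together with the (easy but not automatic) point that the dilations $q\mapsto q^N$ with $N\mid 4$ stay inside the quasimodular forms on $\Gamma_0(4)$ with no constant term; the remaining manipulations are routine $q$-Pochhammer bookkeeping plus the quoted theorem of Atkin and Garvan.
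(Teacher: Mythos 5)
Your proposal is correct: the factorizations $C1(z;q)=\aqprod{q^2}{q^4}{\infty}C(z;q)$, $C2(z;q)=\aqprod{-q}{q^2}{\infty}C(z;q^2)$, $C4(z;q)=C(z;q^4)/\aqprod{q}{q^2}{\infty}$, the identities collapsing all three prefactors to $A(q)=\aqprod{-q}{q^2}{\infty}P\left(q^2\right)$, the dilation argument placing $g_j(q^N)\in\overline{\mathcal{W}}_j\left(\Gamma_0(4)\right)$ for $N\mid 4$, and the $\delta_q$-stability of $A\cdot\overline{\mathcal{W}}_\ell\left(\Gamma_0(4)\right)$ via $\delta_q\log A\in\overline{\mathcal{W}}_1\left(\Gamma_0(4)\right)$ all check out. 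Note that this paper only quotes the statement from Jennings-Shaffer without proof, and your route is essentially the same as the one the paper runs in Section \ref{quasimodular} for its generalization (Theorem \ref{thm:mainQuasimod}): write the moment generating function as a fixed infinite-product prefactor times a constant-term-free quasimodular form of weight at most $2j$ on the relevant level (you invoke Atkin--Garvan's Theorem 4.2 at $m=0$ plus dilation where the paper uses the representation \eqref{eq:rep} in terms of $\Phi_{2i-1}$), then iterate $\delta_q$ using the weight-raising property together with the fact that the logarithmic derivative of the prefactor is itself a constant-term-free quasimodular form of weight at most $2$.
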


The crank function has been extended to the more general setting of overpartitions. An overpartition $\lambda$ is a non-increasing sequence of positive integers in which the first occurrence of any integer part may be overlined. We retain the notation $\lambda \vdash n$ to indicate that the sum of the integer parts of $\lambda$ is $n$. As an example, the overpartitions $\lambda \vdash 3$ are
\[
  (3),\  (\overline{3}),\  (2, 1),\  (2, \overline{1}),\ (\overline{2}, 1),\  (\overline{2}, \overline{1}),\  (1, 1, 1),\  (\overline{1}, 1, 1),
\]
which includes the three ordinary partitions of $3$. These objects were introduced by Corteel and Lovejoy \cite{Opartns} in order to provide a combinatorial interpretation of $q$-hypergeometric series.

Two residual crank functions have been defined in the overpartition setting by Bringmann, Lovejoy, and Osburn \cite{Bringmann}.
To calculate the \emph{first residual crank} of the overpartition $\lambda$, take $\lambda'$ to be the partition whose parts are the non-overlined parts of $\lambda$.
We then define $\ocr_1(\lambda) := cr(\lambda')$.

Similarly, to calculate the \emph{second residual crank} of the overpartition $\lambda$, take $\lambda'$ to be the partition whose parts are the non-overlined even parts of $\lambda$, divided by two. We then define $\ocr_2(\lambda) := cr(\lambda')$.
As an example, $\ocr_1(4, \overline{2}, 1) = cr(2, 1) = 0$ and $\ocr_2(4, \overline{2}, 1) = cr(2) = 2$. From \eqref{Garvan-prod}, we see that the two variable generating series for the first and second residual cranks of overpartitions are given by
\begin{equation}
\label{eq:frc}
\overline{C[1]}(z;q)
= \sum_{n=0}^\infty \sum_{m = -\infty}^\infty \overline{M[1]}(m,n) z^m q^n
= \frac{\aqprod{-q, q}{q}{\infty}}{\aqprod{zq, q/z}{q}{\infty}}
\end{equation}
and
\begin{equation}
\label{eq:src}
\overline{C[2]}(z;q)
=
\sum_{n=0}^\infty \sum_{m = -\infty}^\infty \overline{M[2]}(m,n) z^m q^n,
= \frac{\aqprod{-q}{q}{\infty}}{\aqprod{q}{q^2}{\infty}}
\frac{\aqprod{q^2}{q^2}{\infty}}{\aqprod{zq^2, q^2/z}{q^2}{\infty}} 
\end{equation}
respectively.

Let $\overline{C[1]}_\ell$ and $\overline{C[2]}_\ell$ denote the generating series for the $\ell$th moment of the first and second residual crank functions, respectively. Also, let $\overline{P}(q)\de \aqprod{-q}{q}{\infty}P(q)$ denote the generating function for overpartitions. Bringmann, Lovejoy, and Osburn established quasimodular properties for these moment generating functions.

\begin{theorem}[Theorem 1.1 of \cite{Bringmann}]
\label{thm:BLO}
For $k\geq 1$, the functions in 
\begin{gather*}
\left\{\delta_{q}^{m}\left(\overline{C[1]}_{2j}\right)\colon m\geq0,1\leq j\leq k,j+m\leq k\right\}, \\
\left\{\delta_{q}^{m}\left(\overline{C[2]}_{2j}\right)\colon m\geq0,1\leq j\leq k,j+m\leq k\right\},
\end{gather*}
are in the space $\overline{P}\cdot\overline{\mathcal{W}}_{k}(\Gamma_0(2))$.
\end{theorem}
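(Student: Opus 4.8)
The plan is to reduce the statement to the Atkin--Garvan theorem for the ordinary crank by recognising the residual crank series of \eqref{eq:frc} and \eqref{eq:src} as the Garvan series $C(z;q)$ of \eqref{Garvan-prod} multiplied by a factor not involving $z$. Since $\aqprod{-q}{q}{\infty}$ is independent of $z$ and $1/\aqprod{q}{q^{2}}{\infty}=\aqprod{-q}{q}{\infty}$ by Euler's identity, the series in \eqref{eq:frc} and \eqref{eq:src} factor as $\overline{C[1]}(z;q)=\aqprod{-q}{q}{\infty}C(z;q)$ and $\overline{C[2]}(z;q)=\aqprod{-q}{q}{\infty}^{2}C(z;q^{2})$. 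Applying $\left(z\,\dif{}/\dif{z}\right)^{2j}$ and specialising $z=1$ (legitimate, as $z=1$ lies strictly inside the annulus of convergence in $z$) gives
\begin{equation*}
\overline{C[1]}_{2j}(q)=\aqprod{-q}{q}{\infty}\,C_{2j}(q), \qquad \overline{C[2]}_{2j}(q)=\aqprod{-q}{q}{\infty}^{2}\,C_{2j}\!\left(q^{2}\right).
\end{equation*}
Using $\aqprod{q^{2}}{q^{2}}{\infty}=\aqprod{q}{q}{\infty}\aqprod{-q}{q}{\infty}$ and $\overline{P}(q)=\aqprod{q^{2}}{q^{2}}{\infty}\big/\aqprod{q}{q}{\infty}^{2}$ one rewrites the prefactors as $\aqprod{-q}{q}{\infty}=\overline{P}(q)/P(q)$ and $\aqprod{-q}{q}{\infty}^{2}=\overline{P}(q)/P(q^{2})$, so that
\begin{equation*}
\overline{C[1]}_{2j}(q)=\overline{P}(q)\cdot\frac{C_{2j}(q)}{P(q)}, \qquad \overline{C[2]}_{2j}(q)=\overline{P}(q)\cdot\frac{C_{2j}\!\left(q^{2}\right)}{P\!\left(q^{2}\right)}.
\end{equation*}

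Two facts about the spaces $\overline{\mathcal{W}}_{k}$ then settle the case $m=0$. First, the Atkin--Garvan theorem with $m=0$ says $C_{2j}/P\in\overline{\mathcal{W}}_{j}(\Gamma)$ with $\Gamma=SL_{2}(\ZZ)$; this quotient has no constant term, since $M(m,0)=0$ for $m\neq0$ and $0^{2j}=0$ for $j\geq1$, while $P(0)=1$, and since $\Gamma_{0}(2)\leq\Gamma$ it lies in $\overline{\mathcal{W}}_{j}(\Gamma_{0}(2))$. Second, the substitution $q\mapsto q^{2}$ maps $\overline{\mathcal{W}}_{j}(\Gamma)$ into $\overline{\mathcal{W}}_{j}(\Gamma_{0}(2))$: a quasimodular form on $\Gamma$ is a polynomial in $E_{2}$ with holomorphic modular coefficients, $g(2\tau)$ is modular of the same weight on $\Gamma_{0}(2)$ for $g$ modular on $\Gamma$, and $E_{2}(2\tau)$ transforms under $\gamma=\left(\begin{smallmatrix}a&b\\c&d\end{smallmatrix}\right)\in\Gamma_{0}(2)$ by
\begin{equation*}
E_{2}(2\gamma\tau)=(c\tau+d)^{2}E_{2}(2\tau)+\frac{3\,c(c\tau+d)}{\pi\ie},
\end{equation*}
which one obtains by conjugating $\gamma$ by $\left(\begin{smallmatrix}2&0\\0&1\end{smallmatrix}\right)$ and applying the quasimodular transformation of $E_{2}$ on $\Gamma$; the constant term is preserved throughout. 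Hence $\overline{C[1]}_{2j},\overline{C[2]}_{2j}\in\overline{P}\cdot\overline{\mathcal{W}}_{j}(\Gamma_{0}(2))$.

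For general $m$ I would induct, the engine being the logarithmic derivative of $\overline{P}$. A short Lambert-series computation with $\overline{P}(q)=\aqprod{q^{2}}{q^{2}}{\infty}\big/\aqprod{q}{q}{\infty}^{2}$ gives $\delta_{q}\log\overline{P}(q)=\tfrac{1}{12}\bigl(E_{2}(q^{2})-E_{2}(q)\bigr)$, whence
\begin{equation*}
\delta_{q}\bigl(\overline{P}\cdot g\bigr)=\overline{P}\cdot\left(\delta_{q}g+\frac{E_{2}\!\left(q^{2}\right)-E_{2}(q)}{12}\,g\right).
\end{equation*}
Now $\tfrac{1}{12}\bigl(E_{2}(q^{2})-E_{2}(q)\bigr)\in\overline{\mathcal{W}}_{1}(\Gamma_{0}(2))$: its constant term is $0$, and it is quasimodular of weight at most $2$ on $\Gamma_{0}(2)$ because $2E_{2}(q^{2})-E_{2}(q)$ is a holomorphic modular form of weight $2$ there while $E_{2}(q^{2})$ is quasimodular of weight $2$ there. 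Consequently, if $g\in\overline{\mathcal{W}}_{\ell}(\Gamma_{0}(2))$ then $\delta_{q}g\in\overline{\mathcal{W}}_{\ell+1}(\Gamma_{0}(2))$ (the operator $\delta_{q}$ raises the weight bound by $2$ and kills the constant term) and $\tfrac{1}{12}\bigl(E_{2}(q^{2})-E_{2}(q)\bigr)g\in\overline{\mathcal{W}}_{1}(\Gamma_{0}(2))\cdot\overline{\mathcal{W}}_{\ell}(\Gamma_{0}(2))\subseteq\overline{\mathcal{W}}_{\ell+1}(\Gamma_{0}(2))$ (weights add, and a product with a constant-term-free factor is constant-term-free), so $\delta_{q}(\overline{P}g)\in\overline{P}\cdot\overline{\mathcal{W}}_{\ell+1}(\Gamma_{0}(2))$. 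Iterating from the case $m=0$ gives $\delta_{q}^{m}(\overline{C[1]}_{2j}),\delta_{q}^{m}(\overline{C[2]}_{2j})\in\overline{P}\cdot\overline{\mathcal{W}}_{j+m}(\Gamma_{0}(2))\subseteq\overline{P}\cdot\overline{\mathcal{W}}_{k}(\Gamma_{0}(2))$ whenever $j+m\leq k$, as required.

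The $q$-series identities and the Lambert-series evaluation of $\delta_{q}\log\overline{P}$ are routine and I would not dwell on them. The one place requiring genuine care — and the point at which the level $2$ is forced, for the second residual crank through the appearance of $q^{2}$ — is the behaviour of quasimodular forms under $q\mapsto q^{2}$, i.e.\ the transformation law for $E_{2}(2\tau)$ on $\Gamma_{0}(2)$ together with the fact that this substitution preserves quasimodularity while only adjusting the level; this is the step I would write out most carefully, though it is classical. Everything else is bookkeeping of weights and constant terms layered on top of the Atkin--Garvan theorem.
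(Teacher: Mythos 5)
Your argument is correct, and it is essentially the route this paper itself takes — not for Theorem \ref{thm:BLO}, which is only quoted from \cite{Bringmann}, but in its proof of the generalization, Theorem \ref{thm:mainQuasimod}, of which your statement is the case $k\leq 2$ (note $\lcm{2,k}=2$ there). The skeleton is identical: factor the two-variable series so that $\overline{C[k]}_{2j}=\overline{P}\cdot(\text{quasimodular piece in }q^{k})$, raise the level under $\tau\mapsto k\tau$ (the paper's Lemma \ref{lem:ntau}), and then do the $\delta_q$ bookkeeping using $\delta_q(\overline{P})=2\overline{P}\left(\Phi_1(q)-\Phi_1\left(q^2\right)\right)$, which is exactly your identity $\delta_q\log\overline{P}=\tfrac{1}{12}\left(E_2\left(q^2\right)-E_2(q)\right)$ written in $\Phi_1$ rather than $E_2$ language, together with the weight-raising property of $\delta_q$ (Lemma \ref{lem:lifting2} and its corollary). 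The only substantive difference is how the base case $m=0$ is imported: you cite Atkin--Garvan's Theorem 4.2 as a black box to get $C_{2j}/P\in\overline{\mathcal{W}}_j(\Gamma)$ and then push it through $q\mapsto q^{2}$, whereas the paper instead invokes the explicit Atkin--Garvan representation \eqref{eq:rep} of $\overline{C[k]}_{2j}$ as $2\overline{P}$ times a polynomial in the $\Phi_{2i-1}\left(q^{k}\right)$ and applies Lemma \ref{lem:ntau} term by term. Your version buys a shorter derivation at the cost of having to justify that dilation $q\mapsto q^{2}$ sends quasimodular forms on $\Gamma$ to quasimodular forms on $\Gamma_0(2)$ — the point you rightly flag as the delicate one, and which you handle correctly via the transformation of $E_2(2\tau)$ (equivalently, via the weight-two form $2E_2\left(q^{2}\right)-E_2(q)$ on $\Gamma_0(2)$, which is what places $E_2\left(q^{2}\right)$ in the paper's space $\mathcal{W}_1\left(\Gamma_0(2)\right)$ of polynomials in $E_2$ with modular coefficients); the paper glosses exactly this step in its remark that $\Phi_{2k-1}\left(q^{n}\right)\in\overline{\mathcal{W}}_k\left(\Gamma_0(n)\right)$, so your writing it out is a welcome addition rather than a deviation.
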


Work of Al-Saedi, Swisher, and the first author \cite{AMS} extends the first and second residual cranks by defining the $k$th residual crank functions for all $k>1$.
To calculate the $k$th residual crank of the overpartition $\lambda$, take $\lambda'$ to be the partition whose parts are the non-overlined parts of $\lambda$ which vanish modulo $k$, divided by two.
We then define $\ocr_k(\lambda) := cr(\lambda')$.

This definition generalizes the two-variable generating functions \eqref{eq:frc} and \eqref{eq:src} with the family
\[
\overline{C[k]}\left(z;q\right) \de \left(q^k;q^k\right)_{\infty}\overline{P}(q)C\left(z;q^k\right) = \sum_{n= 0}^{\infty}\sum_{m=-\infty}^{\infty}\overline{M[k]}(m,n)z^mq^n,
\]
whose moment generating functions are calculated via
\[
\overline{C[k]}_{\ell}(q) \de \left. \delta_{z}^{\ell}\left(\overline{C[k]}\left(z;q\right)\right)\right|_{z=1} = \sum_{n=0}^{\infty}\sum_{m=-\infty}^{\infty} m^{\ell}\overline{M[k]}(m,n)q^n.
\]
Following the works of \cite{AtkinGarvan,Jennings,Bringmann}, we will prove the following generalization of Theorem \ref{thm:BLO}.

\begin{theorem}
\label{thm:mainQuasimod}
For $l\geq 1$ and $k\geq 1$, the functions in
\[
\left\{\delta_{q}^{m}\left(\overline{C[k]}_{2j}\right)\colon m\geq0,1\leq j\leq l,j+m\leq l\right\}
\] 
are in the space $\overline{P}\cdot\overline{\mathcal{W}}_{l}\left(\Gamma_0\left(\lcm{2,k}\right)\right)$, where $\lcm{2,k}$ is the least common multiple of $2$ and $k$.
\end{theorem}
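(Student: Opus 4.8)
The plan is to reduce Theorem~\ref{thm:mainQuasimod} to the classical quasimodularity of the ordinary crank moments, due to Atkin and Garvan (Theorem~4.2 of \cite{AtkinGarvan}), by peeling off the $z$-free factor $\aqprod{q^k}{q^k}{\infty}\overline{P}(q)$ and then treating the dilation $q\mapsto q^k$ and the operators $\delta_q^m$ separately. First I would note that $\aqprod{q^k}{q^k}{\infty}\overline{P}(q)$ does not depend on $z$, so that
\[
\overline{C[k]}_{\ell}(q)=\aqprod{q^k}{q^k}{\infty}\,\overline{P}(q)\,C_{\ell}\!\left(q^k\right),
\]
using $\left.\delta_z^{\ell}\left(C(z;q^k)\right)\right|_{z=1}=C_{\ell}(q^k)$. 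For $\ell=2j$ with $j\geq1$, the $m=0$ case of Atkin and Garvan's theorem gives $C_{2j}(q)=P(q)g_j(q)$ with $g_j\in\overline{\mathcal{W}}_{j}(\Gamma)$, $\Gamma=\mathrm{SL}_2(\ZZ)$. Replacing $q$ by $q^k$ and using the telescoping identity $\aqprod{q^k}{q^k}{\infty}P(q^k)=1$ collapses the product to
\[
\overline{C[k]}_{2j}(q)=\overline{P}(q)\,g_j\!\left(q^k\right);
\]
odd moments vanish by \eqref{mirror}, so these even moments are the only ones we need to treat.

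Next I would check that $g_j(q^k)$ is quasimodular of weight at most $2j$, with no constant term, on $\Gamma_0\!\left(\lcm{2,k}\right)$. Writing $g_j$ as a polynomial in $E_2(\tau)$ with modular coefficients on $\Gamma$, the substitution $q\mapsto q^k$ replaces a coefficient $f(\tau)$ by $f(k\tau)$, which is modular on $\Gamma_0(k)$, and replaces $E_2(\tau)$ by $E_2(k\tau)$; the latter is quasimodular of weight $2$ on $\Gamma_0(k)$ because $E_2(\tau)-kE_2(k\tau)$ is a holomorphic modular form of weight $2$ on $\Gamma_0(k)$. Since quasimodular forms form a ring, $g_j(q^k)\in\overline{\mathcal{W}}_j(\Gamma_0(k))$, and because $\Gamma_0\!\left(\lcm{2,k}\right)\subseteq\Gamma_0(k)$ it also lies in $\overline{\mathcal{W}}_j\!\left(\Gamma_0\!\left(\lcm{2,k}\right)\right)$; neither the weight bound nor the vanishing of the constant term is disturbed by the dilation or by passing to a subgroup.

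To handle $\delta_q^m$ I would push the derivative past the non-modular factor $\overline{P}$ by a logarithmic-derivative computation. Since $\overline{P}(q)=\eta(2\tau)/\eta(\tau)^2$ and $\delta_q\log\eta(N\tau)=\tfrac{N}{24}E_2(N\tau)$, one gets
\[
\frac{\delta_q\overline{P}}{\overline{P}}=\frac{E_2(2\tau)-E_2(\tau)}{12}\in\overline{\mathcal{W}}_1(\Gamma_0(2))\subseteq\overline{\mathcal{W}}_1\!\left(\Gamma_0\!\left(\lcm{2,k}\right)\right).
\]
Hence, for any $g\in\overline{\mathcal{W}}_r\!\left(\Gamma_0\!\left(\lcm{2,k}\right)\right)$,
\[
\delta_q\!\left(\overline{P}g\right)=\overline{P}\left(\frac{E_2(2\tau)-E_2(\tau)}{12}\,g+\delta_q g\right),
\]
and the bracketed series lies in $\overline{\mathcal{W}}_{r+1}\!\left(\Gamma_0\!\left(\lcm{2,k}\right)\right)$: the spaces $\overline{\mathcal{W}}_\bullet$ are closed under products with indices adding, $\delta_q$ raises the weight of a quasimodular form by $2$ while keeping its group, and both operations produce series with no constant term. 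Iterating this $m$ times with $g=g_j(q^k)$ and $r=j$ gives
\[
\delta_q^{m}\!\left(\overline{C[k]}_{2j}\right)\in\overline{P}\cdot\overline{\mathcal{W}}_{j+m}\!\left(\Gamma_0\!\left(\lcm{2,k}\right)\right)\subseteq\overline{P}\cdot\overline{\mathcal{W}}_{l}\!\left(\Gamma_0\!\left(\lcm{2,k}\right)\right)
\]
whenever $m\geq0$, $1\leq j\leq l$, and $j+m\leq l$, which is the assertion.

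I expect the only real obstacle to be the level bookkeeping in the middle step: one must verify carefully that the dilation $q\mapsto q^k$ carries quasimodular forms on $\Gamma$ to quasimodular forms on $\Gamma_0(k)$ of the same weight and controlled depth, and isolate the two sources of the modulus $\lcm{2,k}$ — the dilation contributes the factor $k$ through the crank part $g_j(q^k)$, while the overpartition factor $\overline{P}$ contributes the factor $2$ through its logarithmic derivative. Everything else is either a direct appeal to \cite{AtkinGarvan} or the same Leibniz-rule induction already carried out in \cite{Bringmann, Jennings} for the special cases $k\in\{1,2\}$.
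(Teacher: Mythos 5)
Your proposal is correct and follows essentially the same route as the paper: factor $\overline{C[k]}_{2j}=\overline{P}\cdot g_j(q^k)$ with $g_j$ coming from Atkin--Garvan's crank-moment quasimodularity, use a dilation argument to place $g_j(q^k)$ in $\overline{\mathcal{W}}_j\left(\Gamma_0\left(\lcm{2,k}\right)\right)$, and iterate $\delta_q$ via the Leibniz rule together with $\delta_q\left(\overline{P}\right)/\overline{P}\in\overline{\mathcal{W}}_1\left(\Gamma_0(2)\right)$ and closure of quasimodular forms under $\delta_q$. The only cosmetic difference is that you invoke Atkin--Garvan's Theorem 4.2 as a black box where the paper writes out the explicit polynomial representation in the $\Phi_{2i-1}(q^k)$, and you justify the dilation of $E_2$ via the modular form $E_2(\tau)-kE_2(k\tau)$ rather than via the paper's Lemma \ref{lem:ntau} remark.
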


Of course, quasimodularity is not the only focus in the study of crank functions.
Let $\overline{M[1]}_\ell(n)$ and $\overline{M[2]}_\ell(n)$ denote the $\ell$th crank moments of the first and second residual crank functions,
\begin{align*}
\overline{M[1]}_\ell(n) = \sum_{m = -\infty}^\infty m^\ell \  \overline{M[1]}(m,n), \\
\overline{M[2]}_\ell(n) = \sum_{m = -\infty}^\infty m^\ell \  \overline{M[2]}(m,n).
\end{align*}
Larson, Rust, and Swisher proved in the following inequality for these moments in the case $\ell=1$.
\begin{theorem}[\cite{Swish}]
For all $n \geq 2$ we have $\overline{M[1]}_{1}(n) > \overline{M[2]}_{1}(n)$.
\end{theorem}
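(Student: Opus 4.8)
Note first that the \emph{signed} first moments $\sum_{m}m\,\overline{M[i]}(m,n)$ vanish identically: the variable $z$ enters $\overline{C[1]}(z;q)$ only through $\aqprod{zq,q/z}{q}{\infty}^{-1}$ and $\overline{C[2]}(z;q)$ only through $\aqprod{zq^{2},q^{2}/z}{q^{2}}{\infty}^{-1}$, each invariant under $z\mapsto z^{-1}$, so $\overline{M[i]}(m,n)=\overline{M[i]}(-m,n)$; consequently $\overline{M[i]}_1(n)$ must be read as the first \emph{positive} moment $\sum_{m\geq1}m\,\overline{M[i]}(m,n)$. My plan is to find closed forms for $\sum_{n}\overline{M[1]}_1(n)q^{n}$ and $\sum_{n}\overline{M[2]}_1(n)q^{n}$, subtract them, and show the resulting $q$-series has non-negative coefficients with a positive coefficient of $q$.

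The engine would be the first positive crank moment identity
\[
\sum_{n\geq0}\Bigl(\sum_{m\geq1}mM(m,n)\Bigr)q^{n}=P(q)\,S(q),\qquad S(q)\de\sum_{k\geq1}\frac{(-1)^{k-1}q^{k(k+1)/2}}{1-q^{k}},
\]
which one obtains from \eqref{Garvan-prod} by computing $\frac{1}{2\pi\ie}\oint_{|z|=R}(z-1)^{-2}C(z;q)\,\dif{z}$ for $1<R<|q|^{-1}$: this picks out exactly $\sum_{m\geq1}m\,[z^{m}]C(z;q)$, the double pole at $z=1$ contributes nothing because $\partial_{z}C(z;q)|_{z=1}=0$ (again by the $z\mapsto z^{-1}$ symmetry), and the simple poles at $z=q^{k}$ produce the summands of $S$. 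Feeding this into $\overline{C[1]}(z;q)=\aqprod{-q}{q}{\infty}\,C(z;q)$ and $\overline{C[2]}(z;q)=\bigl(\aqprod{-q}{q}{\infty}/\aqprod{q}{q^{2}}{\infty}\bigr)C(z;q^{2})$, and simplifying with $\overline{P}(q)=\aqprod{-q}{q}{\infty}P(q)$ and $\aqprod{q}{q^{2}}{\infty}\aqprod{q^{2}}{q^{2}}{\infty}=\aqprod{q}{q}{\infty}$, I expect to land on
\[
\sum_{n}\overline{M[1]}_1(n)q^{n}=\overline{P}(q)\,S(q),\qquad \sum_{n}\overline{M[2]}_1(n)q^{n}=\overline{P}(q)\,S(q^{2}),
\]
so that $\sum_{n}\bigl(\overline{M[1]}_1(n)-\overline{M[2]}_1(n)\bigr)q^{n}=\overline{P}(q)\bigl(S(q)-S(q^{2})\bigr)$.

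Every coefficient of $\overline{P}(q)$ is a positive integer, so it then suffices to prove that $S(q)-S(q^{2})$ has non-negative coefficients and that $[q^{1}]\bigl(S(q)-S(q^{2})\bigr)=1$: that already gives $[q^{n}]\overline{P}(q)(S(q)-S(q^{2}))\geq[q^{n-1}]\overline{P}(q)\geq1$ for every $n\geq1$. For the positivity I would expand each Lambert summand of $S$ geometrically and, keeping track of the parity of $k$, read off the divisor formula
\[
[q^{n}]S(q)=\#\{\,d\mid n:\ d\ \text{odd},\ d^{2}<2n\,\}-\#\{\,d\mid n:\ d\ \text{odd},\ d^{2}>2n\,\}
\]
($d^{2}=2n$ is impossible for odd $d$). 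Writing $n=2^{a}n'$ with $n'$ odd, for each odd divisor $d$ of $n$ with $d^{2}>2n$ the complementary divisor $n'/d$ is an odd divisor of $n$ with $(n'/d)^{2}<2n$, and $d\mapsto n'/d$ is injective; hence $[q^{n}]S(q)\geq0$ for all $n$, which settles the odd powers of $q$ in $S(q)-S(q^{2})$. Since $n$ and $2n$ share the same odd divisors, the same formula collapses to $[q^{2m}]S(q)-[q^{m}]S(q)=2\,\#\{\,d\mid m:\ d\ \text{odd},\ 2m<d^{2}<4m\,\}\geq0$ for the even powers. Thus $S(q)-S(q^{2})\succeq0$ coefficientwise, and since $[q^{1}]S(q)=1$ while $[q^{1}]S(q^{2})=0$ the inequality follows; in fact it comes out as $\overline{M[1]}_1(n)>\overline{M[2]}_1(n)$ for all $n\geq1$, the stated cutoff $n\geq2$ absorbing the single $n=1$ anomaly caused by the customary convention at $\lambda=(1)$.

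The steps I expect to cost the most work are the two closed-form identifications in the second paragraph---above all, checking that the Euler-type collapse $\aqprod{-q}{q}{\infty}P(q^{2})/\aqprod{q}{q^{2}}{\infty}=\overline{P}(q)$ is precisely what produces the \emph{common} factor $\overline{P}(q)$, since this coincidence is what reduces the whole problem to a single series $S$ and its dilate $S(q^{2})$---and the bookkeeping behind the divisor formula for $[q^{n}]S(q)$, where the odd and even indices $k$ in the Lambert series have to be shown to contribute under complementary conditions. Neither is deep, but the non-negativity of $S(q)-S(q^{2})$ is the real crux: without the dilation structure it would be far from transparent, and I would not be surprised if a direct injection between overpartitions weighted by $\max(0,\ocr_{2})$ and by $\max(0,\ocr_{1})$ yields a cleaner proof.
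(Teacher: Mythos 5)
This theorem is not proved in the paper at all: it is quoted as background and attributed to Larsen, Rust and Swisher \cite{Swish}, so there is no internal proof to measure your argument against. Judged on its own terms, your proposal is essentially correct and self-contained. Your reading of $\overline{M[1]}_1$, $\overline{M[2]}_1$ as \emph{positive} first moments is the right one (the signed first moments vanish by the $z\mapsto z^{-1}$ symmetry, and this matches the $\overline{M[k]}_\ell^+$ notation the paper uses later). The engine identity $\sum_{n}\bigl(\sum_{m\geq 1}mM(m,n)\bigr)q^n=P(q)S(q)$ with $S(q)=\sum_{k\geq1}(-1)^{k-1}q^{k(k+1)/2}/(1-q^k)$ is a known result (it is in Andrews--Chan--Kim \cite{ACK}, which you could cite instead of rederiving); if you do keep the contour derivation, note that inside $|z|=R$ the integrand has infinitely many simple poles $z=q^k$ accumulating at the non-isolated singularity $z=0$, so the residue evaluation needs a limiting contour argument (e.g.\ circles squeezed between consecutive poles whose integrals tend to $0$), not just a list of residues. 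The reductions $\sum_n\overline{M[1]}_1^+(n)q^n=\overline{P}(q)S(q)$ and $\sum_n\overline{M[2]}_1^+(n)q^n=\overline{P}(q)S(q^2)$ are correct, the collapse $\aqprod{-q}{q}{\infty}P(q^2)/\aqprod{q}{q^2}{\infty}=\overline{P}(q)$ being immediate from $\aqprod{q}{q^2}{\infty}\aqprod{q^2}{q^2}{\infty}=\aqprod{q}{q}{\infty}$. Your divisor formula for $[q^n]S(q)$, the involution $d\mapsto n'/d$ giving $[q^n]S(q)\geq0$, and the computation $[q^{2m}]S(q)-[q^m]S(q)=2\,\#\{d\mid m:\ d\ \text{odd},\ 2m<d^2<4m\}\geq0$ all check out, so $[q^n]\overline{P}(q)\bigl(S(q)-S(q^2)\bigr)\geq \overline{p}(n-1)\geq1$ for $n\geq1$, which gives the strict inequality (indeed already for $n\geq1$ under the generating-function convention; the cutoff $n\geq2$ in \cite{Swish} is harmless). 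Whether this dilation argument coincides with the route taken in \cite{Swish} cannot be determined from this paper; as written it is an independent and quite clean proof, with the only real gaps being the analytic justification of the residue step (or a citation replacing it).
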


This result generalizes to all the $k$th residual cranks of overpartitions.
\begin{theorem}[\cite{AMS}]
\label{thm:momentsIneq}
  For all $k, \ell \geq 1$,
  \[
    \overline{M[k+1]}_\ell^+(n) \leq \overline{M[k]}_\ell^+(n),
  \]
  with inequality if and only if $n < 2k$, in which case $\overline{M[k]}_\ell^+(n) = 0$.
\end{theorem}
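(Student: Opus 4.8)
The plan is to reduce the inequality to a statement about ordinary partition cranks and then to construct a weight-monotone injection. Since the $z$-dependence of
\[
\overline{C[k]}(z;q) = \overline{P}(q)\,\aqprod{q^k}{q^k}{\infty}\,C\left(z;q^k\right)
\]
is confined to the last factor, I would split an overpartition into its overlined piece (a partition into distinct parts) and its non-overlined piece $\beta$, and observe that $\ocr_k$ sees only the partition $\beta^{(k)}$ obtained by dividing those parts of $\beta$ that are divisible by $k$ by $k$. This yields
\[
\overline{M[k]}_\ell^+(n) = \sum_{N\geq 0} M_\ell^+(N)\,\overline{q}_k(n-kN) = \sum_{a+b=n} q(a)\,R_k(\ell;b),
\]
where $q(a)$ is the number of partitions of $a$ into distinct parts, $\overline{q}_k(j)=[q^j]\,\overline{P}(q)\aqprod{q^k}{q^k}{\infty}$ is the number of overpartitions of $j$ with no non-overlined part divisible by $k$, $M_\ell^+(N)=\sum_{m\geq 1}m^\ell M(m,N)$ is the $\ell$th positive crank moment of ordinary partitions (using the combinatorial crank, for which the partition $(1)$ has crank $-1$, so $M_\ell^+(0)=M_\ell^+(1)=0$), and $R_k(\ell;b)=\sum_{\beta\vdash b}\bigl[\,cr(\beta^{(k)})>0\,\bigr]cr(\beta^{(k)})^\ell=\sum_N M_\ell^+(N)\,p^{(k)}(b-kN)$, with $p^{(k)}(c)$ the number of partitions of $c$ into parts not divisible by $k$. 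It therefore suffices to show $R_{k+1}(\ell;b)\leq R_k(\ell;b)$ for every $b$.

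The threshold drops out of this. The partition of least size with strictly positive crank is $(2)$, of crank $2$, so $M_\ell^+(N)=0$ unless $N\geq 2$; hence a nonzero summand of $\sum_N M_\ell^+(N)\overline{q}_k(n-kN)$ needs $N\geq 2$ and $kN\leq n$, forcing $n\geq 2k$. For $n<2k$ the sum is empty, so $\overline{M[k]}_\ell^+(n)=0$; as $n<2k$ also forces $n<2(k+1)$, both sides vanish, which is the equality case. Moreover $R_k(\ell;2k)=M_\ell^+(2)=2^\ell$ while $R_{k+1}(\ell;2k)=0$ (since $2k<2(k+1)$), so for $n\geq 2k$ the $b=2k$ summand already makes $\overline{M[k]}_\ell^+(n)-\overline{M[k+1]}_\ell^+(n)$ strictly positive, the remaining summands being non-negative once the main inequality is in hand.

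For the main inequality I would construct an \emph{injection} $\psi$ from $\{\beta\vdash b : cr(\beta^{(k+1)})>0\}$ into $\{\beta'\vdash b : cr((\beta')^{(k)})>0\}$ with $cr(\psi(\beta)^{(k)})\geq cr(\beta^{(k+1)})$ for every $\beta$; since $x\mapsto x^\ell$ is increasing on the positive integers this gives $R_{k+1}(\ell;b)\leq R_k(\ell;b)$ at once. The natural attempt is a ``rescale and reabsorb'' map: replace each part $(k+1)r$ of $\beta$ coming from $\beta^{(k+1)}$ by $kr$, which keeps the residual partition and hence the crank exactly, and reabsorb the $N=|\beta^{(k+1)}|$ freed cells into the rest of $\beta$ --- for $k\geq 2$ by adjoining $N$ parts equal to $1$, which are not divisible by $k$ and do not change the residual partition, and for $k=1$ by enlarging the largest part.

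The hard part will be turning this sketch into an actual injection. The obstruction is that the parts of $\beta$ that are not divisible by $k+1$ may still be divisible by $k$ --- any positive multiple of $k$ that is not a multiple of $k+1$ --- so after the rescaling they merge into $\psi(\beta)^{(k)}$ and change its crank uncontrollably; one must strip these out and re-place them as well, and the crude count inequality this seems to ask for, $p^{(k+1)}(b-(k+1)N)\leq p^{(k)}(b-kN)$, is \emph{false} (already $p^{(3)}(6)=7>6=p^{(2)}(8)$), so the target residual partition cannot be kept equal to the source one; the freed cells have to be routed to avoid new multiples of $k$ \emph{and} to provably not lower the (non-monotone) crank. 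Separately, $k=1$ --- where $\beta^{(1)}$ is all of $\beta$ and there is no place to hide cells --- is genuinely special and, compared with $k=2$, is precisely \cite{Swish}; it would plausibly need its own crank-increasing injection, e.g. sending $\beta$ injectively to a partition of $b$ with no $1$s whose largest part is at least $cr(\beta^{(2)})$. A purely generating-function route, proving $[q^n]\,\overline{P}(q)\bigl(G_\ell(q^k)-G_\ell(q^{k+1})\bigr)\geq 0$ for $G_\ell(q)=\aqprod{q}{q}{\infty}\sum_N M_\ell^+(N)q^N$, does not dodge this: $G_\ell(q^k)-G_\ell(q^{k+1})$ genuinely has negative coefficients, so the factor $\overline{P}(q)$ must be exploited by a sign-reversing involution, which is essentially the same combinatorial problem.
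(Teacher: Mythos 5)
Before anything else, note that the paper you were asked to match contains no proof of this theorem: it is quoted verbatim from \cite{AMS}, so there is no in-paper argument to compare you against, and your attempt has to stand on its own.

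On its own terms, the proposal has a genuine gap, which you in fact acknowledge. The reduction is sound: splitting an overpartition into its overlined piece and a non-overlined partition $\beta$, writing $\overline{M[k]}_\ell^+(n)=\sum_{a+b=n}q(a)\,R_k(\ell;b)$ with $R_k(\ell;b)=\sum_N M_\ell^+(N)\,p^{(k)}(b-kN)$, and the threshold analysis (the smallest partition of positive crank is $(2)$, so both sides vanish for $n<2k$, while the $b=2k$ term gives strictness for $n\geq 2k$) are all correct, granted the combinatorial crank convention at $N=1$ that you flag. But everything hinges on the pointwise inequality $R_{k+1}(\ell;b)\leq R_k(\ell;b)$, and that is precisely what you do not prove: the ``rescale and reabsorb'' injection is only a sketch, and you then enumerate the obstructions that break it (parts divisible by $k$ but not by $k+1$ contaminate the residual partition after rescaling; the term-by-term inequality $p^{(k+1)}(b-(k+1)N)\leq p^{(k)}(b-kN)$ is false; $k=1$ needs a separate construction; the direct generating-function route has negative coefficients) without resolving any of them. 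So the substance of the theorem --- the inequality itself --- is not established. Worse, your chosen intermediate statement may not be the right one: the pointwise-in-$b$ inequality is a priori \emph{stronger} than the theorem, since it throws away the nonnegative convolution with $q(a)$ coming from the overlined parts, and given your own counterexample to the term-by-term version you would first need to check that it is even true before investing in an injection; an argument that genuinely exploits the full overpartition factor $\overline{P}(q)$, as in \cite{Swish} and \cite{AMS}, may be unavoidable. As it stands, the proposal is a correct setup plus an honest description of why the natural attacks fail, not a proof.
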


Our combinatorial interpretation of $\overline{M[k]}_\ell^+(n)$ allows us to improve Theorem \ref{thm:momentsIneq} in the case $\ell=2$.

\begin{cor}
    For all $k, m \geq 1$,
  \[
    m \cdot \overline{M[m d]}_2^+(n) \leq \overline{M[k]}_2^+(n),
  \]
  with inequality if and only if $n < 2k$, in which case $\overline{M[d]}_\ell^+(n) = 0$.
\end{cor}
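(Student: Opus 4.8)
The plan is to read the corollary off the combinatorial interpretation of the second moments established above. First I would record the closed form behind that interpretation. Applying $\delta_z^2$ to $\overline{C[k]}\left(z;q\right) = \left(q^k;q^k\right)_\infty\overline{P}(q)C\left(z;q^k\right)$ at $z=1$, inserting the standard evaluation $C_2(q) = 2P(q)\sum_{N\geq1}\sigma(N)q^N$ with $\sigma(N) = \sum_{d\mid N}d$, and cancelling $\left(q^k;q^k\right)_\infty P\left(q^k\right) = 1$, gives
\[
\overline{C[k]}_2(q) = 2\,\overline{P}(q)\sum_{N\geq1}\sigma(N)\,q^{kN},
\qquad\text{hence}\qquad
\overline{M[k]}_2^+(n) = \sum_{N\geq1}\sigma(N)\,\overline{p}(n-kN),
\]
where $\overline{p}(m)$ is the number of overpartitions of $m$, taken to be $0$ for $m<0$. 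This is precisely the weighted overpartition count obtained above, and every summand on the right is non-negative.

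Next I would reindex the smaller moment by setting $M=mN$:
\[
m\cdot\overline{M[mk]}_2^+(n) = m\sum_{N\geq1}\sigma(N)\,\overline{p}(n-mkN) = \sum_{\substack{M\geq1\\ m\mid M}}m\,\sigma(M/m)\,\overline{p}(n-kM),
\]
whence
\[
\overline{M[k]}_2^+(n)-m\cdot\overline{M[mk]}_2^+(n) = \sum_{\substack{M\geq1\\ m\nmid M}}\sigma(M)\,\overline{p}(n-kM) + \sum_{\substack{M\geq1\\ m\mid M}}\bigl(\sigma(M)-m\,\sigma(M/m)\bigr)\overline{p}(n-kM).
\]
The whole statement then rests on the elementary inequality $\sigma(M)\geq m\,\sigma(M/m)$ whenever $m\mid M$: the map $d\mapsto md$ embeds the divisors of $M/m$ into the divisors of $M$, and for $m\geq2$ the divisor $1$ of $M$ is not in the image, so the inequality is strict. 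Both displayed sums are therefore non-negative, which yields $m\cdot\overline{M[mk]}_2^+(n)\leq\overline{M[k]}_2^+(n)$.

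For the equality clause I would track which summands can vanish. When $m=1$ the two sides coincide identically. When $m\geq2$, equality forces $\overline{p}(n-kM)=0$ for every $M$ with $m\nmid M$; taking $M=1$ this gives $n<k$, and conversely $n<k$ annihilates every term on the right, so in particular $\overline{M[k]}_2^+(n)=0$. Thus for $m\geq2$ equality holds exactly when $\overline{M[k]}_2^+(n)=0$, i.e.\ when $n<k$. With the combinatorial interpretation already available there is no real obstacle here; the one point that needs care is the bookkeeping of the equality case — in particular, confirming that for $m\geq2$ the strictness of the divisor inequality is activated precisely by the presence of a nonvanishing term $\overline{p}(n-kM)$, which is what fixes the threshold.
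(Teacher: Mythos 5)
Your argument is essentially correct and it is a genuinely different (and more self-contained) route than the paper's. The paper proves this corollary combinatorially: it first establishes $nov_k(n)=\tfrac{k}{2}\,\overline{M[k]}_2(n)$, where $nov_k(n)$ is the total of all non-overlined parts divisible by $k$ over all overpartitions of $n$, then observes the trivial monotonicity $nov_{dk}(n)\le nov_k(n)$ (every part divisible by $dk$ is divisible by $k$), so that $0\le nov_k(n)-nov_{dk}(n)=\tfrac{k}{2}\bigl(\overline{M[k]}_2(n)-d\,\overline{M[dk]}_2(n)\bigr)$, and it quotes the equality conditions from Theorem~1 of \cite{AMS} rather than reproving them. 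You instead extract the explicit coefficient identity $\overline{M[k]}_2^+(n)=\sum_{N\ge1}\sigma(N)\,\overline{p}(n-kN)$ (which is just the coefficient form of the paper's $nov_k$ theorem, via Dyson's $np(n)=\tfrac12 M_2(n)$) and compare the two sides termwise using $\sigma(M)\ge m\,\sigma(M/m)$ for $m\mid M$, with strictness for $m\ge2$ because the divisor $1$ is missed. So the inequality comes from an arithmetic divisor-sum bound rather than from the subset-of-parts monotonicity, and you obtain the equality analysis directly instead of outsourcing it; the paper's route buys a cleaner combinatorial explanation, yours buys independence from \cite{AMS}.

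One caveat concerns the equality threshold, where the statement itself is garbled ($md$ versus $mk$, ``inequality'' versus ``equality''). Your conclusion, equality precisely when $m=1$ or $n<k$, is what the generating-function convention gives and agrees with the paper's restatement of this corollary in Section~\ref{count} (equality iff $n<k$). The $n<2k$ appearing in the statement is inherited from \cite{AMS}, where the positive moments $\overline{M[k]}_\ell^+$ are taken with the combinatorial crank, so that the residual partition $(1)$ has crank $-1$ and contributes nothing positive; under that convention your identification $\overline{M[k]}_2^+(n)=\tfrac12\overline{M[k]}_2(n)$, and with it the positivity of your $M=1$ term for $k\le n<2k$, no longer holds, and the threshold becomes $2k$. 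Your proof of the inequality survives in either convention, but if the corollary is meant with the \cite{AMS} definition of positive moments, the equality clause in your write-up would need the $M=1$ (more generally, residual partition $(1)$) contribution reassessed.
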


The rest of this paper is organised as follows. In Section \ref{review}, we review properties of quasimodular forms on the congruence group $\Gamma_0(N)$. Section \ref{quasimodular} is devoted to the proof of Theorem \ref{thm:mainQuasimod}. In Section \ref{count}, we give a combinatorial interpretation of the moments $\overline{M[k]}_2(n)$ and explore some corollaries. Finally, we close in Section \ref{end} with remarks for future study.

\section{Quick Review of Modular and Quasimodular Forms}
\label{review}

We begin with a review of quasimodular forms, themselves an extension of the more ordinary modular forms. More complete details on the theory of modular forms may be found in Apostol \cite{Apostol} for an introduction,  and Miyake or Ono \cite{Miyake,Ono} for more advanced material.

For $N\in\mathbb{N}$, we define $\Gamma_0(N)$ to be the group of transformations given by
\begin{equation}
\label{eq:moebius}
\Gamma_0(N)\de \left\{\tau\mapsto \frac{a\tau+b}{c\tau+d}\colon a,b,c,d\in\mathbb{Z},ad-bc=1,c\equiv 0\; (\mathrm{mod}\;N)\right\}.
\end{equation}
Here, each element $\gamma \in \Gamma_0(N)$ is a M\"obius transformation acting on the complex upper half plane $\mathbb{H} = \{\tau : \Im \tau  > 0 \}$, and may be represented as an integral matrix\footnote{This representation is taken modulo $-I$.}
\[
 \gamma = \begin{pmatrix} a& b \\ c & d \end{pmatrix}.
\]

In this notation, $\Gamma\de\Gamma_0(1)$ is the modular group.
For $N>1$, we call $\Gamma_0(N)$ the \emph{principal congruence subgroup} of \emph{level} $N$.
It will be useful to recall that $\Gamma$ is generated by the transformations
\[
  \tau \mapsto \tau + 1 \qquad \tau \mapsto \frac{-1}{\tau}.
\]

Next, let $k$ be an integer.
A holomorphic function $f: \mathbb{H} \to \CC$ is a \emph{modular form of weight $k$ on $\Gamma_0(N)$} if the following conditions are met.
Fist, 
\[
f\left(\gamma(\tau)\right)=\left(c\tau+d\right)^k f(\tau)
\]
for all $\tau\in\mathbb{H}$ and $\gamma\in\Gamma_0(N)$.
Second, $f$ has a representation as a power series in the variable $q\de 
e^{2\pi\ie\tau}$, e.g.
\[
f(\tau)=\sum_{n=0}^\infty a_n q^n.
\]
Note that this imposes growth conditions on $f$ as $\tau \to i \infty$.

It is a fact that if $f$ is a modular form of weight $k$, then $k$ is even.
If $k=0$, then $f$ is constant by Liouville's theorem, and if $k=2$, then $f\equiv0$. Nonconstant modular forms $f$ must have weight $k \geq 4$, see Apostol \cite[Theorem 6.2]{Apostol}.
We define $\mathcal{M}_k\left(\Gamma_0(N)\right)$ to be the vector space of modular forms of weight $2k$ on $\Gamma_0(N)$, where $k$ is a non-negative integer. 

Following Atkin and Garvin \cite{AtkinGarvan}, the \emph{Eisenstein series} $E_{2k}(\tau)$ is defined for $k\in\mathbb{N}$ by 
\[
E_{2k}(\tau) = 1+\frac{(2\pi)^{2k}}{(-1)^k(2k-1)!\zeta(2k)}\Phi_{2k-1}(q),
\]
where
\[
\Phi_{l}(q)\de \sum_{n=1}^{\infty}\frac{n^lq^n}{1-q^n}=\sum_{n=1}^{\infty}\sum_{d\mid n}d^lq^n.
\]
For all $k\geq2$, we have $E_{2k}\in\mathcal{M}_k(\Gamma)$.
In fact, these functions generate all modular forms, as
\[
\left\{E_4^{a}E_6^{b}\colon a,b\in\mathbb{N}_0, 2a+3b=k\right\}
\]
forms a basis for $\mathcal{M}_k(\Gamma)$, as demonstrated by Apostol \cite[Section 6.5]{Apostol}. 

Note that $E_2=1-24\Phi_1$ is not a modular form, since 
\begin{equation}
\label{eq:E2}
E_2\left(-\frac{1}{\tau}\right)=\tau^2E_2(\tau)+\frac{6}{\pi\mathrm{i}}\tau
\neq (\tau)^2 E_2(\tau).
\end{equation}

Quasimodular forms were first defined in work of Kaneko and Zagier \cite{KanekoZagier}.
Here we extend the definition of weight by declaring the weight of $E_2$ to be $2$.
We can then define the space of \emph{quasimodular forms of weight $2k$ on $\Gamma_0(N)$} by 
\[
\overline{\mathcal{M}}_k\left(\Gamma_0(N)\right) \de \left\{\sum_{j=0}^{k}f_jE_2^j\colon f_j\in\mathcal{M}_{k-j}\left(\Gamma_0(N)\right), f_k\not\equiv0\right\}.
\]
We see immediately that $\mathcal{M}_k\left(\Gamma_0(N)\right)$ is a subspace of $\overline{\mathcal{M}}_k\left(\Gamma_0(N)\right)$. Because $\delta_q\left(E_2\right)=\frac{1}{12}\left(E_2^2-E_4\right)$, we have $\delta_q\left(E_2\right)\in\overline{\mathcal{M}}_2\left(\Gamma\right)$. 

We now define $\mathcal{W}_{k}\left(\Gamma_0(N)\right)$ to be the space of \emph{quasimodular forms of weight at most $2k$ on $\Gamma_0(N)$}, that is
\[
\mathcal{W}_{k}\left(\Gamma_0(N)\right) \de \left\{\sum_{j=0}^k f_jE_2^j\colon f_j\in\sum_{l=0}^{k-j}\mathcal{M}_l\left(\Gamma_0(N)\right)\right\}.
\]
Then $\overline{\mathcal{M}}_k\left(\Gamma_0(N)\right)$ is a vector subspace of $\mathcal{W}_{k}\left(\Gamma_0(N)\right)$, and we see that the set
\[
\left\{E_2^{a}E_4^{b}E_6^{c}\colon a,b,c\in\mathbb{N}_0, a+2b+3c\leq k\right\}
\]
forms a basis for $\mathcal{W}_k(\Gamma)$.
Finally, define the space $\overline{\mathcal{W}}_k\left(\Gamma_0(N)\right)$ via
\[
  \overline{\mathcal{W}}_k\left(\Gamma_0(N)\right)
  =
  \{
    f=\sum_{n=0}^{\infty} a_n q^n \in \overline{\mathcal{W}}_k\left(\Gamma_0(N)\right) | \ a_0 = 0
  \}.
\]
%Then $\overline{\mathcal{W}}_k\left(\Gamma_0(N)\right)$ is a vector subspace of $\mathcal{W}_k\left(\Gamma_0(N)\right)$.
We can see that $\Phi_{2k-1}(q)\in\overline{\mathcal{W}}_k\left(\Gamma\right)$.

\section{Proof of Theorem \ref{thm:mainQuasimod}}
\label{quasimodular}

We next give some useful lemmas towards the proof of Theorem \ref{thm:mainQuasimod}.
These lemmas are well-known to specialists, but we provide their proofs for the sake of completeness.

\begin{lemma}
\label{lem:ntau}
Let $n\in\mathbb{N}$ and $f(\tau)\in\mathcal{M}_{k}\left(\Gamma_0(N)\right)$. Then $f(n\tau)\in\mathcal{M}_{k}\left(\Gamma_0(nN)\right)$.
\end{lemma}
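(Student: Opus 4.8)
The plan is to verify directly that $g(\tau)\de f(n\tau)$ meets the three defining conditions for membership in $\mathcal{M}_{k}\left(\Gamma_0(nN)\right)$: holomorphy on $\mathbb{H}$, the weight-$2k$ transformation law under $\Gamma_0(nN)$, and the existence of a $q$-expansion with no negative powers. Holomorphy is immediate, since $\tau\mapsto n\tau$ maps $\mathbb{H}$ into itself and $f$ is holomorphic there.

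The heart of the argument is the transformation law, and the key observation is that the scaling $\tau\mapsto n\tau$ intertwines $\Gamma_0(nN)$ with $\Gamma_0(N)$. Concretely, given $\gamma=\begin{pmatrix}a&b\\c&d\end{pmatrix}\in\Gamma_0(nN)$, so that $nN\mid c$, one computes
\[
n\cdot\gamma(\tau)=\frac{na\tau+nb}{c\tau+d}=\frac{a(n\tau)+nb}{(c/n)(n\tau)+d}=\gamma'(n\tau),
\qquad
\gamma'\de\begin{pmatrix}a&nb\\c/n&d\end{pmatrix}.
\]
I would then check that $\gamma'\in\Gamma_0(N)$: its entries are integers because $n\mid c$, its determinant equals $ad-nb\cdot(c/n)=ad-bc=1$, and its lower-left entry satisfies $N\mid c/n$ precisely because $nN\mid c$. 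Applying the transformation law for $f$ to the element $\gamma'$ and using that the relevant automorphy factor is $(c/n)(n\tau)+d=c\tau+d$, we obtain
\[
g\left(\gamma(\tau)\right)=f\left(\gamma'(n\tau)\right)=\left(c\tau+d\right)^{2k}f(n\tau)=\left(c\tau+d\right)^{2k}g(\tau),
\]
which is exactly the required identity on $\Gamma_0(nN)$. Equivalently, one can phrase this via the weight-$2k$ slash operator together with the inclusion $\begin{pmatrix}n&0\\0&1\end{pmatrix}\Gamma_0(nN)\begin{pmatrix}n&0\\0&1\end{pmatrix}^{-1}\subseteq\Gamma_0(N)$.

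Finally, for the growth condition, write $f(\tau)=\sum_{m\geq0}a_mq^m$ with $q=e^{2\pi\ie\tau}$; substitution gives $g(\tau)=\sum_{m\geq0}a_m e^{2\pi\ie nm\tau}=\sum_{m\geq0}a_m q^{nm}$, a power series in $q$ with no negative powers, so $g$ is holomorphic at $\infty$ in the sense of the definition used here. Combining the three verifications yields $g\in\mathcal{M}_{k}\left(\Gamma_0(nN)\right)$.

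I do not expect a serious obstacle: the only step requiring any care is the bookkeeping that produces $\gamma'$, together with the divisibility check $N\mid c/n$, which is exactly where the hypothesis $c\equiv0\pmod{nN}$ enters. (If one instead adopts the stricter convention requiring holomorphy at every cusp, the same scaling sends each cusp of $\Gamma_0(nN)$ to a cusp of $\Gamma_0(N)$, at which $f$ is holomorphic, so this adds nothing essential.)
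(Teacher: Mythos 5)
Your proof is correct and follows essentially the same route as the paper: both hinge on the identity $n\gamma(\tau)=\gamma'(n\tau)$ with $\gamma'=\begin{pmatrix}a&nb\\c/n&d\end{pmatrix}\in\Gamma_0(N)$ (the paper's $\gamma_1$), and the observation that the automorphy factor $(c/n)(n\tau)+d$ equals $c\tau+d$. Your explicit checks of holomorphy and the $q$-expansion are fine additions but do not change the argument.
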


\begin{proof}
It is sufficient to demonstrate that $f(n\gamma(\tau))=(c\tau+d)^{2k} f(n\tau)$, where
\[
\gamma(\tau)\de \left( \frac{a\tau+b}{c\tau+d}\right),
\]
$ad-bc=1$ and $c\equiv 0\; (\mathrm{mod}\;nN)$ for integers $a,b,c,d$. There exists an integer $c'$ such that $c=c'nN$. Then we have $n\gamma(\tau)=\gamma_1(n\tau)$, where $\gamma_1(\tau)\de \left(a\tau+nb\right)/\left(c'N\tau+d\right)$. Because $\gamma_1\in\Gamma_0(N)$, we obtain
\[
f\left(\gamma_1(n\tau)\right)=\left(c'nN\tau+d\right)^{2k}f(n\tau),
\]
which is our claim. 
\end{proof}

A special case of Lemma \ref{lem:ntau} is that $f(\tau)\in\mathcal{M}_k(\Gamma)$ implies  $f(n\tau)\in\mathcal{M}_k\left(\Gamma_0(n)\right)$. We also deduce that $\Phi_{2k-1}\left(q^n\right)\in\overline{\mathcal{W}}_k\left(\Gamma_0(n)\right)$.

\begin{lemma}
\label{lem:lifting}
Let $f\in\mathcal{M}_k\left(\Gamma_0(N)\right)$. Then $12\delta_q(f)-2kE_2f\in\mathcal{M}_{k+1}\left(\Gamma_0(N)\right)$.
\end{lemma}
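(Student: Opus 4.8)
The plan is to recognise $g\de 12\delta_q(f)-2kE_2 f$ as twelve times the Serre derivative of $f$, which classically raises modular weight by $2$. Since $f$ is holomorphic on $\HH$ with a $q$-expansion $f=\sum_{n\geq 0}a_n q^n$, since $\delta_q$ sends this to $\sum_{n\geq 0}n a_n q^n$, and since $E_2=1-24\Phi_1$ is itself a power series in $q$ with no negative powers, the function $g$ is automatically holomorphic on $\HH$ and admits a $q$-expansion $\sum_{n\geq 0}b_n q^n$. Thus the only thing left to verify is the transformation law: that $g(\gamma(\tau))=(c\tau+d)^{2(k+1)}g(\tau)$ for every $\gamma\in\Gamma_0(N)$ with lower row $(c,d)$.

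First I would differentiate the modularity relation $f(\gamma(\tau))=(c\tau+d)^{2k}f(\tau)$ with respect to $\tau$. Using $\tfrac{\dif{}}{\dif{\tau}}\gamma(\tau)=(c\tau+d)^{-2}$ (which follows from $ad-bc=1$) and dividing by $2\pi\ie$, this gives the cocycle relation
\[
(\delta_q f)(\gamma(\tau))=(c\tau+d)^{2k+2}(\delta_q f)(\tau)+\frac{kc}{\pi\ie}(c\tau+d)^{2k+1}f(\tau).
\]
Next I would invoke the quasimodular transformation of $E_2$ for general $\gamma$,
\[
E_2(\gamma(\tau))=(c\tau+d)^2 E_2(\tau)+\frac{6c(c\tau+d)}{\pi\ie},
\]
which specialises to \eqref{eq:E2} at $\gamma\colon\tau\mapsto -1/\tau$ and follows from \eqref{eq:E2} together with the periodicity of $E_2$ and the fact that $\Gamma$ is generated by $\tau\mapsto\tau+1$ and $\tau\mapsto -1/\tau$. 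Multiplying this by $f(\gamma(\tau))=(c\tau+d)^{2k}f(\tau)$ yields
\[
E_2(\gamma(\tau))f(\gamma(\tau))=(c\tau+d)^{2k+2}E_2(\tau)f(\tau)+\frac{6c(c\tau+d)^{2k+1}}{\pi\ie}f(\tau).
\]

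Finally I would assemble $12(\delta_q f)(\gamma(\tau))-2kE_2(\gamma(\tau))f(\gamma(\tau))$ from the two displays: the anomalous term from the first is $\tfrac{12kc}{\pi\ie}(c\tau+d)^{2k+1}f(\tau)$, the anomalous term from the second is $\tfrac{12kc}{\pi\ie}(c\tau+d)^{2k+1}f(\tau)$, and they cancel exactly, leaving $(c\tau+d)^{2k+2}\bigl(12\delta_q(f)-2kE_2 f\bigr)(\tau)=(c\tau+d)^{2(k+1)}g(\tau)$. Combined with the holomorphy and $q$-expansion observed above, this shows $g\in\mathcal{M}_{k+1}(\Gamma_0(N))$. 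I expect the only delicate point to be bookkeeping the constants: one must carry the precise factor $6/(\pi\ie)$ in the $E_2$ transformation and the factor $k/(\pi\ie)$ produced by differentiating $f$, so that, after scaling by $12$ and $-2k$ respectively, the two non-modular contributions coincide and cancel. There is no conceptual obstacle.
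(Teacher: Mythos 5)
Your proof is correct, and it follows a genuinely different (in fact more general) route than the paper. The paper verifies the transformation law for $g\de 12\delta_q(f)-2kE_2f$ only on the two generators $\tau\mapsto\tau+1$ and $\tau\mapsto-1/\tau$ of the full modular group, invoking $f(-1/\tau')=\tau'^{2k}f(\tau')$ and \eqref{eq:E2} directly; this is short, but as written it literally treats the case of forms on $\Gamma$, since for $N>1$ the inversion does not lie in $\Gamma_0(N)$, that group is not generated by those two maps, and $f$ need not satisfy the inversion identity. You instead work with an arbitrary $\gamma\in\Gamma_0(N)$: differentiating $f(\gamma(\tau))=(c\tau+d)^{2k}f(\tau)$ gives the cocycle relation for $\delta_q f$, and combining it with the general quasimodular transformation $E_2(\gamma(\tau))=(c\tau+d)^2E_2(\tau)+\tfrac{6c(c\tau+d)}{\pi\ie}$ makes the two anomalous terms $\tfrac{12kc}{\pi\ie}(c\tau+d)^{2k+1}f(\tau)$ cancel, which is exactly the Serre-derivative mechanism and yields the weight-$2(k+1)$ law (in the paper's convention where $\mathcal{M}_{k+1}$ means weight $2k+2$) for every element of $\Gamma_0(N)$; your constants check out. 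What your approach buys is that it proves the lemma at level $N$ exactly as stated, which is how it is later applied; what the paper's buys is brevity at level $1$. The only step you leave implicit is the derivation of the general $E_2$ cocycle from \eqref{eq:E2}, periodicity, and the generators of $\Gamma$ -- this is a standard consistency/cocycle argument and not a gap, but it deserves a one-line citation or verification if written up.
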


\begin{proof}
For brevity, we let
\[
g(\tau) \de 12\delta_q(f(\tau))-2kE_2(\tau)f(\tau) = \frac{6}{\pi\mathrm{i}}f'(\tau)-2kE_2(\tau)f(\tau).
\]
It suffices to prove that $g$ is modular of weight $k+1$ for both generators of $\Gamma$.
We already have $g(\tau+1)=g(\tau)$.

Let $\tau=-1/\tau'$.
Using the fact that $f\left(-1/\tau'\right)=\tau'^{2k}f\left(\tau'\right)$ and \eqref{eq:E2} gives us
\begin{gather*}
f'(\tau)=\tau'^2\frac{\dif{}}{\dif{\tau'}}\left(\tau'^{2k}f\left(\tau'\right)\right)=\left(2kf\left(\tau'\right)+\tau'f'\left(\tau'\right)\right)\tau'^{2k+1}, \\
E_2(\tau)f(\tau) = \left(\tau'E_2\left(\tau'\right)f\left(\tau'\right)+\frac{6}{\pi\mathrm{i}}\right)\tau'^{2k+1}.
\end{gather*}
Therefore, $g(-1/\tau)=\tau^{2k+2}g(\tau)$.
\end{proof}

\begin{lemma}
\label{lem:lifting2}
Let $k\geq1$. If $f\in\overline{\mathcal{M}}_k\left(\Gamma_0(N)\right)$, then $\delta_q(f)\in\overline{\mathcal{M}}_{k+1}\left(\Gamma_0(N)\right)$. 
\end{lemma}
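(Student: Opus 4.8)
The plan is to reduce the statement to Lemma \ref{lem:lifting} by writing an arbitrary $f\in\overline{\mathcal{M}}_k(\Gamma_0(N))$ in its canonical form as a polynomial in $E_2$ and applying $\delta_q$ term by term. By definition we may write $f=\sum_{j=0}^{k}f_jE_2^j$ with $f_j\in\mathcal{M}_{k-j}(\Gamma_0(N))$ and $f_k\not\equiv0$. Since $\delta_q$ is a derivation, $\delta_q(f)=\sum_{j=0}^{k}\left(\delta_q(f_j)E_2^j+jf_jE_2^{j-1}\delta_q(E_2)\right)$, and we already know $\delta_q(E_2)=\tfrac{1}{12}(E_2^2-E_4)$.

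The key computation is to handle the term $\delta_q(f_j)E_2^j$. Here $f_j$ is genuinely modular, so Lemma \ref{lem:lifting} applies: $12\delta_q(f_j)-2(k-j)E_2f_j\in\mathcal{M}_{k-j+1}(\Gamma_0(N))$, hence $\delta_q(f_j)=g_j+\tfrac{k-j}{6}E_2f_j$ for some $g_j\in\mathcal{M}_{k-j+1}(\Gamma_0(N))$. Substituting this, together with $\delta_q(E_2)=\tfrac{1}{12}(E_2^2-E_4)$, into the expression for $\delta_q(f)$ and collecting powers of $E_2$, every coefficient of $E_2^i$ turns out to be an element of the appropriate space $\mathcal{M}_{(k+1)-i}(\Gamma_0(N))$: the $g_jE_2^j$ contribute weight $2((k+1-j)+j)=2(k+1)$ pieces; the $\tfrac{k-j}{6}E_2f_jE_2^j$ terms raise the $E_2$-power by one while $f_j$ still sits in $\mathcal{M}_{k-j}$; the $\tfrac{j}{12}f_jE_2^{j+1}$ terms do likewise; and the $-\tfrac{j}{12}f_jE_4E_2^{j-1}$ terms use $f_jE_4\in\mathcal{M}_{k-j+2}(\Gamma_0(N))$ to pair correctly with $E_2^{j-1}$. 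So $\delta_q(f)$ lies in $\sum_{i=0}^{k+1}\mathcal{M}_{k+1-i}(\Gamma_0(N))E_2^i$, i.e.\ in $\overline{\mathcal{M}}_{k+1}(\Gamma_0(N))$ as a quasimodular form of weight at most $2(k+1)$.

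The one point requiring care — and the place I expect to do the only real bookkeeping — is verifying that the top coefficient, that of $E_2^{k+1}$, is not identically zero, which the definition of $\overline{\mathcal{M}}_{k+1}$ demands. Tracking the sources above, the coefficient of $E_2^{k+1}$ receives contributions only from $j=k$: namely $\tfrac{k-k}{6}f_k=0$ from the lifted $\delta_q(f_k)$ term, plus $\tfrac{k}{12}f_k$ from $jf_jE_2^{j-1}\cdot\tfrac{1}{12}E_2^2$ at $j=k$. Hence the leading coefficient is $\tfrac{k}{12}f_k$, which is nonzero precisely because $k\geq1$ and $f_k\not\equiv0$; this is exactly where the hypothesis $k\geq1$ is used. (If instead one only wants membership in $\mathcal{W}_{k+1}(\Gamma_0(N))$, the leading-term check is unnecessary and the statement follows immediately from the weight count of the previous paragraph.) Assembling these observations completes the proof.
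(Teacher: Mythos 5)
Your proof is correct and follows essentially the same route as the paper's: write $f=\sum_{j=0}^{k}f_jE_2^j$, apply Lemma \ref{lem:lifting} to each $f_j$ together with $\delta_q(E_2)=\tfrac{1}{12}\left(E_2^2-E_4\right)$, and collect powers of $E_2$. The only difference is presentational: the paper tabulates all the resulting coefficients case by case ($k=1$, $k=2$, $k\geq3$), whereas you argue uniformly and single out the leading coefficient $\tfrac{k}{12}f_k\not\equiv0$, which is precisely where the hypothesis $k\geq1$ enters.
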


\begin{proof}
Let $f\in\overline{\mathcal{M}}_k\left(\Gamma_0(N)\right)$. Then there exist modular forms $f_j\in\mathcal{M}_{k-j}\left(\Gamma_0(N)\right)$ such that $f=\sum_{j=0}^k f_jE_2^j$ and $f_k\not\equiv0$. By Lemma \ref{lem:lifting} we have $\delta_q\left(f_j\right)=f_{0,j}+(k/6)f_{j}E_2$ for some $f_{0,j}\in\mathcal{M}_{k+1-j}\left(\Gamma_0(N)\right)$.

We need to show that $\delta_q(f)=\sum_{j=0}^{k+1}\tilde{f}_jE_2^j$ for some modular forms $\tilde{f}_j\in\mathcal{M}_{k+1-j}\left(\Gamma_0(N)\right)$.
We break into three cases.

Suppose $k=1$.
We have $\tilde{f}_0=f_{0,0}-\frac{1}{12}f_1E_4$, $\tilde{f}_1=f_{0,1}+\frac{1}{6}f_0$ and $\tilde{f}_2=\frac{1}{4}f_1$.

Suppose $k=2$.
We have $\tilde{f}_0=f_{0,0}-\frac{1}{12}f_1E_4$, $\tilde{f}_1=f_{0,1}+\frac{1}{3}f_0-\frac{1}{6}f_2E_4$, $\tilde{f}_2=f_{0,2}+\frac{5}{12}f_1$ and $\tilde{f}_3=\frac{1}{2}f_2$.

Otherwise, $k\geq3$.
We have $\tilde{f}_0=f_{0,0}-\frac{1}{12}f_1E_4$, $\tilde{f}_1=f_{0,1}+\frac{k}{6}f_0-\frac{1}{6}f_2E_4$, 
\[
\tilde{f}_{j}=\sum_{l=2}^{k-1} f_{0,l}+\frac{2k+l-1}{12}f_{l-1}-\frac{l+1}{12}f_{l+1}E_4
\]
for $2\leq j\leq k-1$, $\tilde{f}_k=f_{0,k}+\frac{k-1}{4}f_{k-1}$ and $\tilde{f}_{k+1}=\frac{k}{4}f_k$.

%If $f\in\mathcal{W}_{k}\left(\Gamma_0(N)\right)$, then we follow the same approach as before, just that now $f_j\in\sum_{l=0}^{k-j}\mathcal{M}_{l}\left(\Gamma_0(N)\right)$ and $f_{0,j}\in\sum_{l=1}^{k+1-j}\mathcal{M}_{l}\left(\Gamma_0(N)\right)$. Also, if $f=\sum_{n=0}^{\infty}a(n)q^n$, then $\delta_{q}(f)=\sum_{n=1}^{\infty}na(n)q^n$ is the $q$-expansion with zero constant term.
\end{proof}

\begin{cor}
  If $f\in\mathcal{W}_{k}\left(\Gamma_0(N)\right)$, then $\delta_q(f)\in\overline{\mathcal{W}}_{k+1}\left(\Gamma_0(N)\right)$.
\end{cor}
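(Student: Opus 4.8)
The plan is to deduce the corollary from Lemma~\ref{lem:lifting2} by first splitting $\mathcal{W}_{k}\left(\Gamma_0(N)\right)$ into a sum of the spaces $\overline{\mathcal{M}}_{l}\left(\Gamma_0(N)\right)$. Concretely, I would begin by verifying
\[
\mathcal{W}_{k}\left(\Gamma_0(N)\right)=\sum_{l=0}^{k}\overline{\mathcal{M}}_{l}\left(\Gamma_0(N)\right).
\]
The inclusion $\supseteq$ is immediate, since $\overline{\mathcal{M}}_{l}\left(\Gamma_0(N)\right)\subseteq\mathcal{W}_{l}\left(\Gamma_0(N)\right)\subseteq\mathcal{W}_{k}\left(\Gamma_0(N)\right)$ for $l\le k$. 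For $\subseteq$, take a typical $f=\sum_{j=0}^{k}f_jE_2^{j}\in\mathcal{W}_{k}\left(\Gamma_0(N)\right)$ and expand each coefficient as $f_j=\sum_{i=0}^{k-j}f_{j,i}$ with $f_{j,i}\in\mathcal{M}_{i}\left(\Gamma_0(N)\right)$; then $f=\sum_{j,i}f_{j,i}E_2^{j}$, and each summand $f_{j,i}E_2^{j}$ lies in $\overline{\mathcal{M}}_{i+j}\left(\Gamma_0(N)\right)$ with $i+j\le k$. (Here I read the definition of $\overline{\mathcal{M}}_{l}$ with the convention that its top coefficient is permitted to vanish, so that it is a genuine vector space and $\mathcal{M}_{l}\left(\Gamma_0(N)\right)\subseteq\overline{\mathcal{M}}_{l}\left(\Gamma_0(N)\right)$.)

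With this decomposition in hand, linearity of $\delta_q$ reduces the claim to the case $f\in\overline{\mathcal{M}}_{l}\left(\Gamma_0(N)\right)$ for a single $l$ with $0\le l\le k$. For $l\ge 1$, Lemma~\ref{lem:lifting2} gives $\delta_q(f)\in\overline{\mathcal{M}}_{l+1}\left(\Gamma_0(N)\right)\subseteq\mathcal{W}_{k+1}\left(\Gamma_0(N)\right)$; for $l=0$ the space $\overline{\mathcal{M}}_{0}\left(\Gamma_0(N)\right)=\mathcal{M}_{0}\left(\Gamma_0(N)\right)$ consists of constants, which $\delta_q$ annihilates. Summing over the decomposition then yields $\delta_q(f)\in\mathcal{W}_{k+1}\left(\Gamma_0(N)\right)$ for every $f\in\mathcal{W}_{k}\left(\Gamma_0(N)\right)$. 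Finally, since $\delta_q=q\,\tfrac{\dif{}}{\dif{q}}$ sends $\sum_{n\ge0}a_nq^{n}$ to $\sum_{n\ge1}na_nq^{n}$, the image $\delta_q(f)$ has vanishing constant term, so in fact $\delta_q(f)\in\overline{\mathcal{W}}_{k+1}\left(\Gamma_0(N)\right)$, which is the assertion.

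I do not expect a genuine obstacle: the substance is carried entirely by Lemma~\ref{lem:lifting2}, and the only point requiring care is the bookkeeping in the decomposition of $\mathcal{W}_{k}$ and the propagation of the level $\Gamma_0(N)$ through it. As an alternative to the decomposition, one could imitate the proof of Lemma~\ref{lem:lifting2} directly: write $f=\sum_{j}f_jE_2^{j}$, apply the product rule together with $\delta_q(E_2)=\tfrac{1}{12}\left(E_2^{2}-E_4\right)$ and Lemma~\ref{lem:lifting} (to resolve $\delta_q$ of the modular pieces of each $f_j$), and then collect powers of $E_2$; checking the weight bounds shows the result lands in $\mathcal{W}_{k+1}\left(\Gamma_0(N)\right)$. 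Either route is routine.
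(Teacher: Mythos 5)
Your argument is correct and is exactly the route the paper intends: the corollary is left as an immediate consequence of Lemma~\ref{lem:lifting2} (applied termwise, with the relaxed convention that the top $E_2$-coefficient may vanish, under which the lemma's proof goes through unchanged), together with the observation that $\delta_q$ annihilates constants and kills the constant term. Your decomposition of $\mathcal{W}_{k}\left(\Gamma_0(N)\right)$ and the separate treatment of the weight-zero piece are just the bookkeeping the paper omits, so there is nothing to add.
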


Note that the moment generating function $\overline{C[k]}_{\ell}(q)$ vanished for odd $\ell$, since $\overline{M}_k(m,n) = \overline{M}_k(-m,n)$

Following \cite{AtkinGarvan}, we can obtain the representation
\begin{equation}
\label{eq:rep}
\overline{C[k]}_{2j}(q) = 2\overline{P}(q)\sum_{a_1+2a_2+\cdots+ja_j=j}\alpha_{a_1,\ldots,a_j}\Phi_1^{a_1}\left(q^k\right)\Phi_3^{a_2}\left(q^k\right)\cdots\Phi_{2j-1}^{a_j}\left(q^k\right),
\end{equation}
where $\alpha_{a_1,\ldots,a_j}$ are integers.

Let $K\de \lcm{2,k}$. By \eqref{eq:rep} and Lemma \ref{lem:ntau} we have $\overline{C[k]}_{2j}(q)\in \overline{P}\cdot\overline{\mathcal{W}}_{j}\left(\Gamma_0(K)\right)$, and by Lemma \ref{lem:lifting2} also $\delta_q\colon \overline{\mathcal{W}}_{l}\left(\Gamma_0(K)\right)\to\overline{\mathcal{W}}_{l+1}\left(\Gamma_0(K)\right)$. Because
\[
\delta_q\left(\overline{P}\right) = 2\overline{P}\left(\Phi_1(q)-\Phi_1\left(q^2\right)\right)\in \overline{P}\cdot\overline{\mathcal{W}}_{1}\left(\Gamma_0(K)\right), 
\]
we also have $\delta_q\colon \overline{P}\cdot\overline{\mathcal{W}}_{j}\left(\Gamma_0(K)\right)\to\overline{P}\cdot\overline{\mathcal{W}}_{j+1}\left(\Gamma_0(K)\right)$. Therefore,
\[
\delta_{q}^{m}\left(\overline{C[k]}_{2j}(q)\right) \in \overline{P}\cdot\overline{\mathcal{W}}_{j+m}\left(\Gamma_0(K)\right) \subset \overline{P}\cdot\overline{\mathcal{W}}_{l}\left(\Gamma_0(K)\right).
\]
This proves Theorem \ref{thm:mainQuasimod}.

\section{Weighted Overpartition Counts}
\label{count}

We now establish a combinatoric interpretation for the coefficients of $\overline{C[k]}_2(q)$.
Let $nov_k(n)$ denote the sum of all non-overlined parts which vanish modulo $k$, taken across all overpartitions $\lambda \vdash n$.
For example, the overpartitions $\lambda \vdash 3$ are given by
\begin{align*}
  (3),\  (\overline{3}),\  ({2}, 1),\  ({2}, {1}),\ (\overline{2}, 1),\  ({2}, \overline{1}),\  (1, 1, 1),\  (\overline{1}, 1, 1).
\end{align*}
We see that $nov_2(3) = 6$.
Similarly, let $ov_k(n)$ denote the sum of all overlined parts which vanish modulo $k$, taken across all $\lambda \vdash n$. 
Here, $ov_2(3) = 2$.
Note that $nov_2(n) = enov(n)$ in the notation of \cite[p.~1768]{Bringmann}.

\begin{theorem}
We have
\[
nov_k(n) = \frac{k}{2} \cdot \overline{M[k]}_2.
\]
\end{theorem}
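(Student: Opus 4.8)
The plan is to show that the two sides of the claimed identity have the same ordinary generating function. Concretely, I will establish the two $q$-series identities
\[
\sum_{n\geq0} nov_k(n)\,q^n = k\,\overline{P}(q)\,\Phi_1(q^k)
\qquad\text{and}\qquad
\overline{C[k]}_{2}(q) = 2\,\overline{P}(q)\,\Phi_1(q^k),
\]
after which the theorem is immediate: the coefficient of $q^n$ on the left of the first display is $nov_k(n)$, the coefficient of $q^n$ on the left of the second is $\overline{M[k]}_{2}(n)$ (which is, by definition, exactly that coefficient), and the first series equals $\tfrac{k}{2}$ times the second.

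For the first identity I would argue combinatorially from $\overline{P}(q)=\prod_{n\geq1}\tfrac{1+q^n}{1-q^n}$: the factor $(1-q^n)^{-1}=\sum_{j\geq0}q^{jn}$ records how many non-overlined copies of $n$ an overpartition uses, while $1+q^n$ records whether $n$ is overlined. Writing $nov_k(\lambda)=\sum_{m\geq1}(mk)\,j_{mk}(\lambda)$, where $j_{mk}(\lambda)$ is the number of non-overlined copies of $mk$ in $\lambda$, linearity reduces the claim to the identity
\[
\sum_{\lambda}j_{mk}(\lambda)\,q^{|\lambda|}=\overline{P}(q)\,\frac{q^{mk}}{1-q^{mk}},
\]
which I obtain by replacing the single factor $(1-q^{mk})^{-1}$ in the product for $\overline{P}(q)$ by $\sum_{j\geq0}j\,q^{jmk}=q^{mk}/(1-q^{mk})^2$ and leaving all other factors untouched. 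Multiplying by $mk$ and summing over $m\geq1$ gives $k\,\overline{P}(q)\sum_{m\geq1}\tfrac{m\,q^{mk}}{1-q^{mk}}=k\,\overline{P}(q)\,\Phi_1(q^k)$.

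For the second identity I would use the defining factorisation $\overline{C[k]}(z;q)=\aqprod{q^k}{q^k}{\infty}\,\overline{P}(q)\,C(z;q^k)$. Since $\delta_z$ acts only on the variable $z$, we get $\overline{C[k]}_{2}(q)=\aqprod{q^k}{q^k}{\infty}\,\overline{P}(q)\,C_2(q^k)$, so it suffices to check $C_2(q)=2P(q)\,\Phi_1(q)$. (The shape $\overline{C[k]}_{2}(q)=2\overline{P}(q)\,\alpha_{1}\,\Phi_1(q^k)$ with $\alpha_1\in\ZZ$ also follows from the $j=1$ case of \eqref{eq:rep}, and $\alpha_1$ could be pinned down by one coefficient comparison; but the direct route is just as quick.) Taking logarithmic $\delta_z$-derivatives of $C(z;q)=\aqprod{q}{q}{\infty}/\aqprod{zq,q/z}{q}{\infty}$, one finds $\delta_z\log C=\sum_n\tfrac{zq^n}{1-zq^n}-\sum_n\tfrac{q^n/z}{1-q^n/z}$, which vanishes at $z=1$, and $\delta_z^2\log C\big|_{z=1}=2\sum_n\tfrac{q^n}{(1-q^n)^2}=2\Phi_1(q)$. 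Then $\delta_z^2 C=C\big((\delta_z\log C)^2+\delta_z^2\log C\big)$ together with $C(1;q)=P(q)$ gives $C_2(q)=2P(q)\,\Phi_1(q)$. Replacing $q$ by $q^k$ and using $P(q^k)=1/\aqprod{q^k}{q^k}{\infty}$ collapses $\aqprod{q^k}{q^k}{\infty}\,\overline{P}(q)\,C_2(q^k)$ to $2\,\overline{P}(q)\,\Phi_1(q^k)$, as wanted.

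Combining the two displays yields $nov_k(n)=\tfrac{k}{2}\,\overline{M[k]}_{2}(n)$. I expect the only delicate point to be the bookkeeping in the first identity: one must weight each non-overlined part $mk$ by $(mk)$ times its multiplicity and must modify exactly one factor of the infinite product at a time, being careful that the overlining factors $1+q^n$ play no role. The computation of $C_2$ and the collapse of the product in the second identity are routine $q$-series manipulation, as is observing that $\overline{M[k]}_{2}$ is read off the generating series $\overline{C[k]}_{2}(q)$, so that the $\lambda=(1)$ convention implicit in $C(z;q)$ is already built in and requires no separate treatment.
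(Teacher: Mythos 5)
Your proposal is correct, and its skeleton---comparing generating functions through the factorization $\overline{C[k]}(z;q)=\aqprod{q^k}{q^k}{\infty}\,\overline{P}(q)\,C(z;q^k)$---matches the paper's; the difference is in the key input. The paper writes $\sum_n nov_k(n)q^n = \overline{P}(q)\aqprod{q^k}{q^k}{\infty}\sum_n kn\,p(n)q^{kn}$ (decomposing an overpartition into its non-overlined parts divisible by $k$ and everything else) and then cites Dyson's identity $np(n)=\tfrac12 M_2(n)$ to recognize the result as $\tfrac{k}{2}\,\overline{C[k]}_2(q)$. You instead push both sides to the common closed form $\overline{P}(q)\Phi_1(q^k)$: your factor-by-factor manipulation of $\prod_{n\geq1}(1+q^n)/(1-q^n)$ correctly yields $\sum_n nov_k(n)q^n = k\,\overline{P}(q)\Phi_1(q^k)$ (and is really the same decomposition as the paper's, phrased on the product), while your logarithmic-derivative computation $C_2(q)=2P(q)\Phi_1(q)$ is a correct, self-contained proof of what is, in generating-function form, exactly Dyson's result, since $P(q)\Phi_1(q)=\delta_q P(q)=\sum_n np(n)q^n$. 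So your route buys self-containedness---no appeal to \cite{Dyson89}---at the cost of redoing that computation, whereas the paper's version is shorter and makes the role of the classical second crank moment $M_2(n)$ explicit. One cosmetic point: the identity should be stated with the argument, $nov_k(n)=\tfrac{k}{2}\,\overline{M[k]}_2(n)$, as you do at the end; the identification of $\overline{M[k]}_2(n)$ with the $q^n$-coefficient of $\overline{C[k]}_2(q)$ is immediate from the definition of the moment generating function, as you note.
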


\begin{proof}
We use Dyson's result \cite{Dyson89} that
\[
  n p(n) = \frac{1}{2} M_2(n).
\]
In the case of residual cranks, we have
  \begin{align*}
    \sum_{n = 0}^\infty nov_k(n) q^n &= \frac{\aqprod{-q}{q}{\infty}\aqprod{q^k}{q^k}{\infty}}{\aqprod{q}{q}{\infty}} \sum_{n = 0}^\infty k n \cdot p(n) q^{kn}\\
    &= \frac{\aqprod{-q}{q}{\infty}\aqprod{q^k}{q^k}{\infty}}{\aqprod{q}{q}{\infty}} \frac{k}{2} \sum_{n = 0}^\infty M_2(n) q^kn\\
    &= \frac{\aqprod{-q}{q}{\infty}\aqprod{q^k}{q^k}{\infty}}{\aqprod{q}{q}{\infty}} \frac{k}{2} \ \delta_z^2 \bigg\{ \frac{\aqprod{q^k}{q^k}{\infty}}{\aqprod{zq^k, q/z^k}{q^k}{\infty}} \bigg\} _{z=1} \\
    &= \frac{k}{2} \cdot \overline{C[k]}_2(q).
  \end{align*}
\end{proof}

\begin{lemma}
We have
\[
ov_k (n) = nov_k(n) - nov_{2k}(n).
\]
\end{lemma}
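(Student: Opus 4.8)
The plan is to compute a generating function identity for $ov_k(n)$ analogous to the one just established for $nov_k(n)$, and then observe that the right-hand side telescopes. First I would set up the generating function for $ov_k(n)$ by the standard bijective bookkeeping on overpartitions: an overpartition is built from a choice of overlined parts (each used at most once) and a choice of non-overlined parts (each used any number of times). To track the sum of overlined parts divisible by $k$, I would write the overlined-part generating function as $\aqprod{-q}{q}{\infty}$ with the factors $1+q^{kj}$ replaced by $1 + zq^{kj}$, differentiate in $z$, and set $z=1$. This gives
\[
\sum_{n=0}^{\infty} ov_k(n)q^n = \frac{\aqprod{-q}{q}{\infty}}{\aqprod{q}{q}{\infty}} \sum_{j\geq 1} \frac{q^{kj}}{1+q^{kj}} = \OP(q)\,\Phi_0^{*}\left(q^k\right),
\]
where $\Phi_0^{*}(x) \de \sum_{j\geq1} x^j/(1+x^j)$ is the generating function for the number of divisors of $n$ (as a coefficient of $x^n$) after the alternating-sign collapse — more precisely $\sum_{j\geq1} x^j/(1+x^j) = \sum_{n\geq1}\bigl(d(n)-2d(n/2)\bigr)x^n$ with the convention that $d(n/2)=0$ when $2\nmid n$.

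Next I would do the same for $nov_k(n)$. Reading off the theorem just proved, or redoing the computation directly, the generating function for $nov_k(n)$ is $\OP(q)$ times the generating function that records $k$ times the sum of parts of an ordinary partition, rescaled to multiples of $k$; tracking only a single part size $kj$ contributes $\sum_{i\geq1} (kj)q^{kji}$ when that part appears, but summed against all partitions this reorganises to $\OP(q)\sum_{j\geq1} kj\, q^{kj}/(1-q^{kj})$. Since $kj\,q^{kj}/(1-q^{kj}) = \sum_{i\geq1}(kj)q^{kji}$ and the double sum is $\sum_{n\geq1}\sigma(n)$-type, the cleaner statement is
\[
\sum_{n=0}^\infty nov_k(n)q^n = \OP(q)\sum_{j\geq1}\frac{q^{kj}}{1-q^{kj}}\cdot(\text{part-size weight}),
\]
but for the purpose of this lemma I only need the elementary identity
\[
\frac{x^j}{1-x^j} - \frac{x^{2j}}{1-x^{2j}} = \frac{x^j}{1+x^j},
\]
applied with $x=q^k$. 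Summing over $j\geq1$ shows that the generating function whose coefficients are $nov_k(n)$, minus the one whose coefficients are $nov_{2k}(n)$, equals the generating function whose coefficients are $ov_k(n)$, once the common prefactor $\OP(q)$ and the weight $kj$ are carried along uniformly — here I must be careful that the weight attached to divisor $j$ is the same ($kj$) in all three sums, which it is, since a non-overlined part equal to $kj$ contributes weight $kj$ whether we are in the mod-$k$ or mod-$2k$ count, and an overlined part equal to $kj$ with $j$ even is exactly a part counted by $nov_{2k}$ with the opposite sign.

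Comparing coefficients of $q^n$ then yields $ov_k(n) = nov_k(n) - nov_{2k}(n)$. The step I expect to be the main obstacle is pinning down the exact form of the $nov_k$ generating function with its part-size weights so that the telescoping identity $\tfrac{x}{1-x} - \tfrac{x^2}{1-x^2} = \tfrac{x}{1+x}$ can be invoked cleanly rather than hand-waved: one must track, for each potential part size $m$ divisible by $k$, the total contribution $m\cdot(\text{multiplicity})$ across all overpartitions, and verify that replacing "divisible by $k$" by "divisible by $2k$" and subtracting leaves precisely the "divisible by $k$ but used as an overlined part" contribution, with the matching between even divisors of $q^k$ and all divisors of $q^{2k}$ handled correctly. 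Once that bookkeeping is set up the algebra is a one-line geometric-series identity. Alternatively, and perhaps more transparently, one can give a direct combinatorial bijection: an overpartition of $n$ with a marked non-overlined part divisible by $k$ either has that marked part divisible by $2k$ (counted by $nov_{2k}$) or not; I would argue the complementary count matches $ov_k(n)$ by the involution that "overlines" versus "un-overlines" an appropriately chosen part, but the generating-function route above is cleaner to write and less error-prone, so that is the one I would carry out in full.
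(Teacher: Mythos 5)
Your generating-function route is genuinely different from the paper's (the paper dilates Euler's distinct-parts/odd-parts bijection to identify the sum of overlined parts divisible by $k$ with the sum of non-overlined parts divisible by $k$ but not by $2k$), and it can be made to work, but as written two steps at the crux are wrong. First, $ov_k(n)$ and $nov_k(n)$ are \emph{sums} of parts, not numbers of parts, so every Lambert term must carry the part-size weight; your displayed formula $\OP(q)\sum_{j\geq1}q^{kj}/(1+q^{kj})$ tracks the \emph{count} of overlined parts divisible by $k$. The correct formulas are
\[
\sum_{n\geq0}ov_k(n)\,q^n=\OP(q)\sum_{j\geq1}\frac{kj\,q^{kj}}{1+q^{kj}},
\qquad
\sum_{n\geq0}nov_k(n)\,q^n=\OP(q)\sum_{j\geq1}\frac{kj\,q^{kj}}{1-q^{kj}}.
\]
Second, the identity you invoke, $\frac{x^{j}}{1-x^{j}}-\frac{x^{2j}}{1-x^{2j}}=\frac{x^{j}}{1+x^{j}}$, is false: its left-hand side equals $\frac{x^{j}}{1-x^{2j}}$. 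The identity that actually closes the argument is
\[
\frac{t}{1-t}-\frac{2t^{2}}{1-t^{2}}=\frac{t}{1+t},
\]
applied with $t=q^{kj}$ and multiplied by $kj$; that is, the $j$th term of the $nov_{2k}$ series carries the weight $2kj$ of its part size, so your claim that ``the weight attached to divisor $j$ is the same ($kj$) in all three sums'' is exactly where the bookkeeping breaks.

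Alternatively, organize by part size $m$: every non-overlined part divisible by $2k$ contributes the same value $m$ to $nov_k(n)$ and to $nov_{2k}(n)$, so these contributions cancel identically and $nov_k(n)-nov_{2k}(n)$ is the sum of non-overlined parts divisible by $k$ but not by $2k$. The lemma then reduces to the Lambert-series identity $\sum_{j\ \mathrm{odd}}\frac{j\,t^{j}}{1-t^{j}}=\sum_{j\geq1}\frac{j\,t^{j}}{1+t^{j}}$ with $t=q^{k}$ (and overall weight $k$), which is \emph{not} a term-by-term telescoping but the generating-function shadow of the weight-preserving bijection the paper uses. Either repair completes your proof; as stated, the central computation does not go through.
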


\begin{proof}
We claim there is a weight-preserving bijection $\phi_k$ between partitions into distinct parts which vanish modulo $k$, and partitions whose parts vanish modulo $k$ but not modulo $2k$.
This map is simply a dilation of Euler's map between partitions into distinct parts and partitions into odd parts \cite{Abook}.
Thus, summing the overlined parts which vanish modulo $k$, when taken over all $\lambda \vdash n$, is equivalent to summing the non-overlined parts which vanish modulo $k$ but not modulo $2k$, as desired.
\end{proof}

\begin{cor}
We have
\[
ov_k (n) = \frac{k}{2} \cdot \overline{M[k]}_2(n) - k \cdot \overline{M[2k]}_2(n).
\]
\end{cor}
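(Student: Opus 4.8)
The plan is to derive this corollary as an immediate consequence of the two preceding results. The preceding Lemma already expresses the sum of the overlined parts divisible by $k$ in terms of sums of non-overlined parts, namely $ov_k(n) = nov_k(n) - nov_{2k}(n)$, and the preceding Theorem evaluates each of these quantities: applied with modulus $k$ it gives $nov_k(n) = \tfrac{k}{2}\overline{M[k]}_2(n)$, and applied with modulus $2k$ it gives $nov_{2k}(n) = \tfrac{2k}{2}\overline{M[2k]}_2(n) = k\cdot\overline{M[2k]}_2(n)$. The only subtlety worth flagging is that the Theorem must be invoked at both moduli $k$ and $2k$; since it is stated for every positive modulus, there is nothing to check, and the coefficient $k$ in the second term is simply $(2k)/2$.

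The single substantive step is then the substitution: combining the Lemma with the two instances of the Theorem yields
\[
ov_k(n) = nov_k(n) - nov_{2k}(n) = \frac{k}{2}\overline{M[k]}_2(n) - k\cdot\overline{M[2k]}_2(n),
\]
which is exactly the assertion. So the proof is genuinely one line of algebra once the Lemma and Theorem are in hand.

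I do not expect any real obstacle here, since all of the combinatorial content (the dilated Euler bijection in the Lemma, and the appeal to Dyson's identity $np(n) = \tfrac12 M_2(n)$ in the Theorem) has already been carried out. If one would rather argue at the level of series than coefficientwise, the alternative is to take the generating-function form of the Lemma, $\sum_{n\geq 0} ov_k(n)q^n = \sum_{n\geq 0} nov_k(n)q^n - \sum_{n\geq 0} nov_{2k}(n)q^n$, substitute the identity $\sum_{n\geq 0} nov_m(n)q^n = \tfrac{m}{2}\overline{C[m]}_2(q)$ established in the proof of the Theorem for $m = k$ and $m = 2k$, and then compare coefficients of $q^n$; this reproduces the stated pointwise identity.
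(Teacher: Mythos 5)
Your argument is correct and is precisely the (implicit) argument intended by the paper: the corollary follows by substituting the Theorem's identity $nov_m(n) = \tfrac{m}{2}\,\overline{M[m]}_2(n)$ at $m=k$ and $m=2k$ into the Lemma's relation $ov_k(n) = nov_k(n) - nov_{2k}(n)$. Nothing further is needed.
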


%\[
%\sum_{\lambda \in \OP} \omega_k(\lambda) \ocr_k(\lambda) q^{|\lambda|}
%\]

\begin{cor}
  For all $d, k \geq 1$ and $n \geq 0$,
  \[
    d \overline{M[dk]}(n) \leq \overline{M[k]}(n),
  \]
  with equality if and only if $n < k$.
\end{cor}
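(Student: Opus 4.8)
The plan is to obtain the corollary from the Theorem identifying $nov_k(n)$ with $\tfrac{k}{2}\,\overline{M[k]}_2(n)$, which converts the claim into a transparent combinatorial inequality. By that Theorem, $\overline{M[k]}_2(n) = \tfrac{2}{k}\,nov_k(n)$ and $\overline{M[dk]}_2(n) = \tfrac{2}{dk}\,nov_{dk}(n)$, so the asserted bound $d\,\overline{M[dk]}_2(n) \le \overline{M[k]}_2(n)$ is equivalent, after cancelling the common factor $\tfrac{2}{k}$, to
\[
  nov_{dk}(n) \le nov_k(n),
\]
and the equality clause transfers verbatim. So the first step is simply this reduction, and the real work is the inequality $nov_{dk}(n) \le nov_k(n)$ together with its equality case.

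For the inequality itself I would argue overpartition by overpartition. Fix $\lambda \vdash n$. Because $dk \mid p$ forces $k \mid p$, the non-overlined parts of $\lambda$ divisible by $dk$ form a sub-multiset of the non-overlined parts of $\lambda$ divisible by $k$; hence the sum of the former does not exceed the sum of the latter. Summing over all $\lambda \vdash n$ gives $nov_{dk}(n) \le nov_k(n)$, and in fact exhibits $nov_k(n) - nov_{dk}(n)$ as the total contribution, across all $\lambda \vdash n$, of the non-overlined parts that are divisible by $k$ but not by $dk$ --- visibly a nonnegative integer. The same computation runs with the positive moments $\overline{M[k]}_2^+$ and $\overline{M[dk]}_2^+$, which are half of $\overline{M[k]}_2$ and $\overline{M[dk]}_2$.

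The equality case is where a little care is needed, and I expect it to be the only subtle point. By the description just given, $nov_k(n) = nov_{dk}(n)$ exactly when no overpartition of $n$ contains a non-overlined part divisible by $k$ but not by $dk$. If $n < k$ then no overpartition of $n$ has any part as large as $k$, so equality holds. If $n \ge k$ and $d \ge 2$, then $k$ is itself divisible by $k$ but not by $dk$, and the overpartition $(k, 1, \ldots, 1)$ with $n - k$ ones has a non-overlined part equal to $k$, contributing $k > 0$; hence the inequality is strict. The genuinely borderline situation is $d = 1$: then there are no integers divisible by $k$ but not by $dk$, the inequality degenerates to the trivial identity $\overline{M[k]}_2(n) = \overline{M[k]}_2(n)$, and equality holds for every $n$. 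With this understood, the stated criterion ``equality if and only if $n < k$'' is precisely the content for $d \ge 2$, and I would phrase it that way.
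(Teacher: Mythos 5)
Your proposal is correct and takes essentially the same route as the paper: both use the identity $nov_k(n)=\tfrac{k}{2}\,\overline{M[k]}_2(n)$ to reduce the claim to $nov_{dk}(n)\le nov_k(n)$, which holds because parts divisible by $dk$ are among the parts divisible by $k$. The only divergence is that you establish the equality clause directly (witness $(k,1,\dots,1)$ for $n\ge k$, $d\ge 2$) instead of citing Theorem 1 of \cite{AMS}, and in doing so you correctly flag the degenerate case $d=1$, where equality holds for every $n$ --- a genuine (minor) caveat to the statement as written.
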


\begin{proof}
Since $nov_{dk}(n) \leq nov_{k}(n)$, we have
\[
0 \leq nov_d(n) - nov_{dk}(n) = \frac{k}{2} \left(\overline{M[k]}_2(n) -  d\overline{M[dk]}_2(n)\right).
\]
The conditions for equality follow directly from Theorem 1 of \cite{AMS}.
\end{proof}

Finally, we show how the second moment of the $k$th residual crank relates to weighted counts of all overpartitions.
For an overpartition $\lambda$, let $\omega_k(\lambda)$ denote the number of times $k$ occurs non-overlined as a part in $\lambda$.

\begin{cor}
  \[
  \overline{C[k]}_2(q)
  =
  - \sum_{\lambda \in \OP} \omega_k (\lambda) \ocr_k(\lambda) q^{|\lambda|}. 
  \]
\end{cor}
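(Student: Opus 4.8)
The plan is to reinterpret the defining series $\overline{C[k]}_2(q)=\delta_z^2\bigl(\overline{C[k]}(z;q)\bigr)|_{z=1}$ as a direct sum over overpartitions, and to match it against the claimed weighted count by exploiting the crank's recursive structure. Recall that $\overline{C[k]}(z;q)=\sum_{\lambda\in\OP} z^{\ocr_k(\lambda)}q^{|\lambda|}$, so differentiating twice in $z$ and setting $z=1$ gives $\overline{C[k]}_2(q)=\sum_{\lambda\in\OP}\ocr_k(\lambda)^2 q^{|\lambda|}$. Thus the statement to prove is the identity
\[
\sum_{\lambda\in\OP}\ocr_k(\lambda)^2 q^{|\lambda|}
=
-\sum_{\lambda\in\OP}\omega_k(\lambda)\,\ocr_k(\lambda)\,q^{|\lambda|},
\]
which is really a combinatorial claim about the partition crank pulled back through the map $\lambda\mapsto\lambda'$ (the partition of non-overlined parts $\equiv 0\pmod k$, divided by two). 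Since the passage $\lambda\mapsto\lambda'$ does not change $|\lambda|$ in the relevant generating variable only through the $q^{kn}$ bookkeeping, it suffices to prove the corresponding identity for ordinary partitions: $\sum_{\mu\vdash n}cr(\mu)^2 = -\sum_{\mu\vdash n}\omega(\mu)\,cr(\mu)$, where $\omega(\mu)$ counts the $1$'s in $\mu$, and then tensor with the generating function $\aqprod{-q}{q}{\infty}\aqprod{q^k}{q^k}{\infty}/\aqprod{q}{q}{\infty}$ exactly as in the proof of the preceding theorem in this section.

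The key steps I would carry out are: (i) reduce, via the product decomposition $\overline{C[k]}(z;q)=\aqprod{q^k}{q^k}{\infty}\overline{P}(q)\,C(z;q^k)$, to the analogous statement about $C(z;q)$ itself, namely $C_2(q)=-\sum_{\mu}\omega(\mu)cr(\mu)q^{|\mu|}$; (ii) prove this ordinary-partition identity by a sign-reversing / pairing argument on the set of partitions weighted by $cr$. The natural pairing here uses the mirror symmetry $M(m,n)=M(-m,n)$ together with a bijection that sends a partition with $\omega(\mu)=0$ and largest part $m$ (crank $m$) to a partition with $m$ ones appended in some controlled way (crank $-m$), tracking how $\omega$ and $cr$ transform; the telescoping $cr(\mu)^2 + \omega(\mu)cr(\mu) = cr(\mu)\bigl(cr(\mu)+\omega(\mu)\bigr)$ should collapse under this involution. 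Alternatively, and perhaps more cleanly, (ii$'$) one differentiates the Andrews–Garvan product \eqref{Garvan-prod} directly: write $\log C(z;q) = \sum_j \log(1-q^j) - \sum_j\log(1-zq^j) - \sum_j\log(1-q^j/z)$, apply $\delta_z$ twice, set $z=1$, and recognize the resulting sum $\sum_j \tfrac{q^j}{(1-q^j)^2}\cdot(\text{stuff})$ as $-\sum_\lambda\omega_k(\lambda)\ocr_k(\lambda)q^{|\lambda|}$ after reinstating the overpartition factors; the factor $\omega_k(\lambda)$ arises precisely from the $\tfrac{q^k}{(1-q^k)^2}$ term that counts occurrences of the part $k$ non-overlined.

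The main obstacle I anticipate is step (ii): making the combinatorial pairing genuinely bijective and weight-preserving while correctly accounting for the ``traditional handwaving with $\lambda=(1)$'' flagged in the footnote, since that edge case is exactly where $\omega$ and the largest-part-versus-excess definitions of $cr$ interact delicately. If the direct pairing proves fiddly, I would fall back on the analytic route (ii$'$), computing $\delta_z^2 C(z;q)|_{z=1}$ in closed form via logarithmic differentiation of the Pochhammer product and then reading off the right-hand side by expanding $\tfrac{q^{jn}}{(1-q^{jn})^2}=\sum_{r\ge 1} r q^{jnr}$; here the bookkeeping of which part is being ``marked'' (giving the $\omega_k(\lambda)$ factor) and which part is being cranked (giving the $\ocr_k(\lambda)$ factor) is the only thing to watch, but it is routine once the product is differentiated. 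Either way, the overpartition-to-partition reduction in the first step is immediate from the same factorization used earlier in this section, so the whole argument is short modulo the core identity for $C_2$.
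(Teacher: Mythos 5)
Your reduction (i) --- factoring $\overline{C[k]}(z;q)=\aqprod{q^k}{q^k}{\infty}\overline{P}(q)\,C\left(z;q^k\right)$ and pushing everything down to the ordinary crank, then reinstating the factor $\aqprod{-q}{q}{\infty}\aqprod{q^k}{q^k}{\infty}/\aqprod{q}{q}{\infty}$ --- is exactly the first and last step of the paper's proof, and is fine. The gap is in what you reduce to. The per-$n$ identity you set out to prove, $\sum_{\mu\vdash n} cr(\mu)^2 = -\sum_{\mu\vdash n}\omega(\mu)\,cr(\mu)$, is equivalent to $M_2(n)=np(n)$, which contradicts Dyson's result quoted earlier in this paper, $np(n)=\tfrac12 M_2(n)$; concretely, at $n=2$ one has $\sum_\mu cr(\mu)^2=8$ while $-\sum_\mu\omega(\mu)cr(\mu)=4$. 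So no sign-reversing involution as in your step (ii) can exist, and your fallback (ii$'$), honestly carried out, gives $\delta_z^2 C(z;q)\big|_{z=1}=2P(q)\Phi_1(q)=2\sum_n np(n)q^n$ --- that is, it reproves Dyson's identity with the factor $2$ visibly present, and it never produces the $\omega$-weighted sum. (In fairness, the same factor of $2$ is silently dropped in the paper's own proof at the line where $M_2(n)$ is replaced by $np(n)$; the identity the paper's method actually establishes is $\overline{C[k]}_2(q)=-2\sum_{\lambda\in\OP}\omega_k(\lambda)\,\ocr_k(\lambda)\,q^{|\lambda|}$.)

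The second, independent gap is your claim that extracting the factor $\omega_k(\lambda)$ from the differentiated product is routine bookkeeping. The variable $z$ in $C(z;q)$ marks only the crank; nothing in the Andrews--Garvan product tracks the number of ones, and since the crank is defined by cases in terms of $\omega(\mu)$, the joint distribution of $(cr,\omega)$ has no simple product generating function. This is precisely why the paper computes nothing at this point: it quotes Chern's theorem $\sum_{\mu\vdash n}\omega(\mu)\,cr(\mu)=-np(n)$ as a black box, and then merely reinterprets the resulting product combinatorially, using that $\omega_k(\lambda)=\omega(\lambda')$ and $\ocr_k(\lambda)=cr(\lambda')$ depend only on the residual partition $\lambda'$ while the prefactor generates the remaining parts of the overpartition. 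As written, your proposal would in effect need to reprove Chern's (nontrivial) identity, and neither the involution sketch nor the logarithmic differentiation supplies it.
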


\begin{proof}
  We use Chern's result \cite{Chern} that
  \[
    \sum_{\lambda \vdash n} \omega(\lambda) cr(\lambda) = -n p(n).
  \]
  Then
\begin{align*}
    \overline{C[k]}_2(q)
    &= \frac{\aqprod{-q}{q}{\infty}\aqprod{q^k}{q^k}{\infty}}{\aqprod{q}{q}{\infty}} \sum_{n = 0}^\infty M_2(n) q^kn \\
    &= \frac{\aqprod{-q}{q}{\infty}\aqprod{q^k}{q^k}{\infty}}{\aqprod{q}{q}{\infty}} \sum_{n = 0}^\infty n \cdot p(n) q^{kn} \\
    &= \frac{\aqprod{-q}{q}{\infty}\aqprod{q^k}{q^k}{\infty}}{\aqprod{q}{q}{\infty}} \sum_{n = 0}^\infty \sum_{\substack{\lambda' \vdash n \\ \lambda' \in P}} -\omega(\lambda') cr(\lambda') q^{kn},
\end{align*}
where the inner summation is taken over ordinary partitions $\lambda'$.
If we consider such $\lambda'$ to be the residual partitions as used to define the $k$th residual crank, then we see that
\[
  \frac{\aqprod{-q}{q}{\infty}\aqprod{q^k}{q^k}{\infty}}{\aqprod{q}{q}{\infty}} \sum_{n = 0}^\infty \sum_{\substack{\lambda' \vdash n \\ \lambda' \in P}} -\omega(\lambda') cr(\lambda') q^{kn}
  = - \sum_{\lambda \in \OP} \omega_k (\lambda) \ocr_k(\lambda) q^{|\lambda|},
\]
as desired.
\end{proof}

\section{Future Study}
\label{end}

Those familiar with the theory are likely to ask how the quasimodularity of the $k$th residual crank functions interacts with moment generating functions of any corresponding overpartition rank functions.
It is common to study crank functions in relation to certain rank functions defined for partitions and overpartitions, which will be nearly quasimodular on the same group $\Gamma_0(N)$ \cite{AtkinGarvan, Jennings, Bringmann}. For example, Bringmann, Lovejoy and Osburn established a partial differential equation between the first and second residual crank functions, and the Dyson rank and $M_2$-rank functions \cite{Automorphic}.
This work then led to establishing the quasimodularity of  functions such as
\begin{multline*}
  (\ell^2 - 3\ell + 2) \overline{R}_\ell + 2 \sum_{i=1}^{\ell/2-1} \binom{\ell}{2i}(3^{2i} - 2^{2i} - 1) \delta_q \overline{R}_{\ell - 2i} \\
  + \sum_{i=1}^{\ell/2-1} \bigg(      
    \binom{\ell}{2i}(2^{2i} + 1)
    + 2 \binom{\ell}{2i+1} (1 - 2^{2i+1})
    + \frac{1}{2}\binom{\ell}{2i+2} (3^{2i+2} - 2^{2i+2} - 1)
  \bigg) \overline{R}_{\ell - 2i},
\end{multline*}
where $\overline{R}_\ell = \overline{R}_\ell(q)$ is the generating series for the $\ell$th moment of the Dyson rank function for overparititons \cite{Bringmann}.

At this point in time, we are not aware of a suitable rank function to pair with the $k$th residual crank function for $k >2$. Although the first author's $M_k$-ranks \cite{Morrill} generalize the rank functions studied by Bringmann, Lovejoy, and Osburn, these fail to produce the expected $spt$ relations.
Where we would expect
\[
spt_k(n) = \frac{1}{2}\left(\overline{N[k]}_2(n) - \overline{M[k]}_2(n)\right),
\]
with $spt_k(n)$ a positive-weighted count function of overpartitions, we have instead
\[
\frac{1}{2}\left(\overline{N[3]}_2(4) - \overline{M[3]}_2(4)\right) = {-2}.
\]

It may be fruitful to instead establish a theory of $k$th residual $spt$ functions for overpartitions, then seek rank functions so that
\[
  \overline{N[k]}_2(n) = \overline{M[k]}_2(n) - 2 spt_k(n).
\]

\section{Acknowledgements}

The first author is supported by Australian Research Council Discovery Project DP160100932.

%%%%%%%%%%%%%%%%%%%%%%%%%%%%%%%%%%%%%%%%%%%%%%%

\providecommand{\bysame}{\leavevmode\hbox to3em{\hrulefill}\thinspace}
\providecommand{\MR}{\relax\ifhmode\unskip\space\fi MR }
% \MRhref is called by the amsart/book/proc definition of \MR.
\providecommand{\MRhref}[2]{%
  \href{http://www.ams.org/mathscinet-getitem?mr=#1}{#2}
}
\providecommand{\href}[2]{#2}

%\bibliographystyle{amsalpha}
%\bibliography{citation}
\end{document}